\documentclass[11pt]{amsart}
\usepackage{amsfonts}
\usepackage{hyperref}
\usepackage[utf8]{inputenc}
\usepackage[T2A]{fontenc}
\usepackage[ukrainian,english]{babel}
\usepackage[dvips]{graphicx}
\usepackage{amssymb}
\usepackage{amsmath}
\usepackage{dsfont}
\usepackage{color}
\usepackage{latexsym}
\usepackage{bbm}
\usepackage{color}
\usepackage{amsthm}
\usepackage{multicol}
\usepackage[top   = 2.75cm,
bottom = 2.50cm,
left   = 2.50cm,
right  = 2.00cm]{geometry}

\bibstyle{plain}
\theoremstyle{plain}
\newtheorem{theorem}{Theorem}
\newtheorem{lemma}[theorem]{Lemma}

\newtheorem{corollary}[theorem]{Corollary}

\newtheorem{definition}[theorem]{Definition}

\theoremstyle{remark}

\newtheorem{remark}[theorem]{Remark}

\begin{document}

\title[]{Infinite continued fractions with a finite alphabet and their applications}
\author[M.\ V.\ Pratsiovytyi, S.\ P.\ Ratushniak]{ M.\ V.\ Pratsiovytyi, S.\ P.\ Ratushniak}

\newcommand{\eacr}{\newline\indent}

\address{M.V. Pratsiovytyi\eacr
Institute of Mathematics of NASU,
 Dragomanov Ukrainian State University, Kyiv,
Ukraine\acr ORCID 0000-0001-6130-9413}
\email{prats4444@gmail.com}

\address{S.P. Ratushniak\eacr
Institute of Mathematics of NASU,
 Dragomanov Ukrainian State University, Kyiv,
Ukraine\acr ORCID 0009-0005-2849-6233}
\email{ratush404@gmail.com}

\subjclass[2020]{Primary: 11A55; Secondary: 11A67}
\keywords{Сontinued fraction, $A_s$-representation of numbers, 
zero redundancy, number representation systems, cylinder sets, 
metric properties, topological properties, fractal analysis. \\This work was supported by a grant from the Simons
Foundation (SFI-PD-Ukraine-00014586, M.P., S.R.)}

\date{\today}

\newcommand{\acr}{\newline\indent}

\begin{abstract}
The paper investigates the topological and metric properties of the set $E$ of values of all infinite continued fractions of the form
\[0+\frac{1}{a_1+\dfrac{1}{a_2+_{\ddots}}}=1/a_1+1/a_2+...+1/a_n+...\equiv[0;a_1,a_2,...,a_n,...],\]
whose partial quotients ($a_n$) take values in a finite set of positive real numbers  $\{e_0,e_1,...,e_{s-1}\}$, $e_0<e_1<...<e_{s-1}$. It is established that the set $E$ is bounded, has cardinality continuum, and is perfect. Conditions under which $E$ is an interval, a nowhere dense set, or a set of Lebesgue measure zero are established. Necessary and sufficient conditions are obtained for $E$  to be an interval and for the corresponding coding system by such continued fractions to have zero redundancy, i.e., for each number to have at most two representations.

The geometric meaning of the digits in this representation, as well as the metric relations, is described in terms of the properties of cylindrical sets (cylinders and cylindrical intervals). It is proved that the basic metric ratio, defined as the ratio of the diameter of a cylinder to the diameter of its parent cylinder, is bounded away from both zero and one. This property is important for various applications of this representation in fractal analysis, the theory of continuous functions with locally complicated structure, and singular probability measures.
\end{abstract}

\maketitle

\section*{Introduction}
Continued fractions have numerous applications in various fields of mathematics and beyond. They play a particularly important role in number theory and approximation theory~\cite{Bodnar_Skorob,irwin,iosifescu,jonesthron,chin,pahir,PratsMakCh_2019}, as well as in the theory of singular functions and the theory of distributions of random variables~\cite{kul,PratsCh_2019,Lechinskii,pr,vynnyshyn1,vynnyshyn2}. Continued-fraction transformations can be considered in the broader context of dynamical systems and their discrete and continuous analogues~\cite{Stanzhytskyi}.

In~\cite{DKPr}, a model for representing a real number from a certain interval in the form of an infinite continued fraction (an $A_2$-continued fraction) was proposed. Its elements belong to the set
$A_2=\{e_0,e_1\}$
of positive real numbers satisfying the condition $e_0e_1=\frac{1}{2}$. This system for encoding numbers from an entire interval by means of a two-symbol alphabet has zero redundancy, i.e., each number has at most two formally distinct representations. Moreover, the numbers having two such representations (the $A_2$-binary numbers) form a countable everywhere dense subset of the interval, which is the set of values of such continued fractions.

Unlike the classical binary representation and some of its generalizations, the representation of numbers by $A_2$-continued fractions is not self-similar, which complicates the study of problems in metric number theory and fractal analysis. At the same time, the combination of the advantages of a two-symbol alphabet and the continued fraction structure provides, in many respects, certain relative theoretical convenience~\cite{Prats_monog}: the former provides technical convenience, while the latter ensures a higher rate of convergence. $A_2$-representations of numbers have been used in fractal analysis, metric number theory, and the theory of singular continuous measures~\cite{DKPr,prk,PratsKyurchev}, as well as in the theory of continuous functions with locally complicated behavior and structurally fractal properties~\cite{PGLR}, among other areas.

In the present paper, we consider infinite $A_s$-continued fractions ($s\geq 2$), which generalize $A_2$-continued fractions, and the corresponding systems for encoding real numbers, with both zero and nonzero redundancy, obtained by expanding numbers into $A_s$-continued fractions. We study the properties of $A_s$-representations of numbers and their geometry, including the geometric meaning of digits, properties of cylinder sets, and certain metric relations. We also investigate the topological and metric properties of the set of values of all $A_s$-continued fractions and establish, in particular, a criterion for the zero redundancy of the corresponding representation system.

 \section{Infinite continued $A_s$-fractions}

 Let $2\leq s$ be a fixed positive integer, and let $(e_0,e_1,...,e_{s-1})$ be a given set of positive numbers satisfying  $0<e_0<e_1<...<e_{s-1}$. The set $A_s\equiv\{e_0,e_1,...,e_{s-1}\}$ is called an alphabet, while the set $L_s\equiv A_s\times A_s\times... =\{(a_n):~ a_n\in A_s\}$ is called the space of sequences over the alphabet $A_s$.

An expression of the form
  \begin{equation}\label{fr}
  0+\frac{1}{a_1+\dfrac{1}{a_2+_{\ddots}}}=
  1/a_1+1/a_2+...+1/a_n+...\equiv[0;a_1,a_2,...,a_n,...],
  \end{equation}
  where$(a_n)\in L_s$, is called an \emph{infinite $A_s$-continued fraction} (briefly, an $A_s$-fraction). Since $a_1+a_2+...=\infty$ for every sequence $(a_n)\in L_s$, it follows from Seidel's theorem~\cite{seidel} that every $A_s$-fraction converges; that is, the sequence of its convergents has a finite limit $$\lim\limits_{n\to\infty}\frac{p_n(a_n)}{q_n(a_n)}=
  x=[0;a_1,a_2,...,a_n,...],$$
where  $p_{n+1}=a_{n+1}p_{n}+p_{n-1}$, $q_{n+1}=a_{n+1}q_{n}+q_{n-1}$, $p_0\equiv 0$, $p_1\equiv1$, $q_0\equiv 1$, $q_1\equiv a_1$.
 
The symbolic notation $[0;a_1,a_2,...,a_n,...]$ of the continued fraction~\eqref{fr} together with its value, is called its $A_s$-representation. Parentheses are used to indicate a period in the sequence of elements of the representation. We consider only infinite continued fractions.
 
Our main object of study is the set $E \equiv E(A_s)$ of values of all $A_s$-fractions, together with its structural, topological-metric, and fractal properties.
 \begin{theorem}\label{lem:porivn}
For every sequence of pairs $(a_n,b_n)\in A_s^2\times A_s^2\times\cdots$ the following double inequality holds:
\begin{equation}\label{eq0}
  [0;(e_{s-1},e_0)]\leq [0;a_1,b_1,\ldots,a_n,b_n,\ldots]\leq [0;(e_0,e_{s-1})].
\end{equation}
\begin{equation}\label{eq:1}
\min E=[0;(e_{s-1},e_0)]=\dfrac{\sqrt{e_0e_{s-1}(e_0e_{s-1}+4)}-e_0e_{s-1}}{2e_{s-1}}\equiv d_0,
\end{equation}
\begin{equation}\label{eq:2}
\max E=[0;(e_0, e_{s-1})]=\dfrac{\sqrt{e_0e_{s-1}(e_0e_{s-1}+4)}-e_0e_{s-1}}{2e_0}\equiv d_1.
\end{equation}
\end{theorem}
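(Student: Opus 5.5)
Our plan rests on the monotonicity of a continued fraction in each of its partial quotients, combined with a limiting argument for the infinite tails.

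\emph{Monotonicity.} For a finite fraction $[0;a_1,\ldots,a_n]$ with positive real $a_k$, we prove by induction on $n$ that the value is decreasing in $a_k$ for odd $k$ and increasing in $a_k$ for even $k$. The mechanism is simple. Write $[0;a_1,\ldots,a_n]=1/(a_1+t)$ with $t=[0;a_2,\ldots,a_n]$. The map $u\mapsto 1/(a_1+u)$ is decreasing in $u$ and in $a_1$. Hence each application flips the direction of dependence, so parity decides the sign.

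Let $m=e_0$ and $M=e_{s-1}$. Since $m\le a_k\le M$, the pointwise comparison of finite truncations gives
\[
[0;M,m,M,m,\ldots]_n\le [0;a_1,b_1,\ldots]_n\le [0;m,M,m,M,\ldots]_n,
\]
where $[\cdot]_n$ denotes the $n$-th convergent. Here odd positions receive the larger value $M$ in the lower bound and the smaller value $m$ in the upper bound, and even positions the reverse.

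\emph{Passing to the limit.} Every $A_s$-fraction converges (by Seidel's theorem, as noted above), and so do the two periodic fractions. Letting $n\to\infty$ in the convergent inequalities yields \eqref{eq0}. The pairing into $(a_n,b_n)$ plays no role beyond indexing, so the bound holds for every sequence in $L_s$. Both periodic fractions are themselves $A_s$-fractions, so they belong to $E$. This gives $\min E=[0;(e_{s-1},e_0)]$ and $\max E=[0;(e_0,e_{s-1})]$.

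\emph{Closed forms.} Let $y=[0;(M,m)]$ and $z=[0;(m,M)]$. By periodicity,
\[
y=\frac{1}{M+z},\qquad z=\frac{1}{m+y},
\]
so
\[
y=\frac{1}{M+\frac{1}{m+y}}=\frac{m+y}{M(m+y)+1}.
\]
Clearing denominators gives $My^2+Mmy-m=0$. The positive root is
\[
y=\frac{-Mm+\sqrt{M^2m^2+4Mm}}{2M},
\]
which matches \eqref{eq:1}. Symmetrically, $mz^2+Mmz-M=0$, whose positive root matches \eqref{eq:2}.

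The one step requiring care is justifying the fixed-point equation. We do this by noting that the tail $[0;a_3,a_4,\ldots]$ of a periodic fraction equals the whole fraction. This in turn follows from continuity of $u\mapsto 1/(a_1+1/(a_2+u))$ together with convergence of the tails. Beyond that, the argument is routine.
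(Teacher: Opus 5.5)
Your proof is correct and follows essentially the paper's route. The paper also bounds the finite truncations by those of the two alternating extremal fractions and then passes to the limit; it does this by an induction on even-length truncations that peels off two quotients at a time and tacitly uses the same monotonicity you make explicit, whereas you prove coordinatewise monotonicity for all truncations. Your fixed-point derivation of the closed forms $d_0$ and $d_1$, via $My^2+Mmy-m=0$ and $mz^2+Mmz-M=0$, is a welcome addition, since the paper states these formulas without deriving them.
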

\begin{proof}
 First, we show that, for an arbitrary $n\in N$,
  \begin{equation}\label{eq0n}
     [0;\underbrace{e_{s-1},e_0,\ldots,e_{s-1},e_0}_{2n}]\leq [0;a_1,b_1,\ldots,a_n,b_n]\leq [0;\underbrace{e_0,e_{s-1},\ldots,e_0,e_{s-1}}_{2n}].
  \end{equation}
  To this end, we use the method of mathematical induction. Let $n=1$. In this case, the double inequality~\eqref{eq0n} takes the form
  \begin{equation}\label{eq0n1}
    \frac{1}{e_{s-1}+\frac{1}{e_0}}\leq\frac{1}{a_1+\frac{1}{b_1}}\leq\frac{1}{e_0+\frac{1}{e_{s-1}}}\Longleftrightarrow
    \frac{e_0}{e_0e_{s-1}+1}\leq\frac{b_1}{a_1b_1+1}\leq\frac{e_{s-1}}{e_0e_{s-1}+1}.
  \end{equation}
   Consider the difference
   \[\rho=\frac{b_1}{a_1b_1+1}-\frac{e_0}{e_0e_{s-1}+1}=\frac{e_0e_{s-1}b_1+b_1-e_0a_1b_1-e_0}{(a_1b_1+1)(e_0e_{s-1}+1)}=
   \frac{e_0b_1(e_{s-1}-a_1)+(b_1-e_0)}{(a_1b_1+1)(e_0e_{s-1}+1)}.\]
   Since $e_{s-1}-a_1\geq 0$ and $b_1-e_0\geq 0$, we have $\rho\geq0$. Hence, the left-hand inequality in~\eqref{eq0n1} holds. The right-hand inequality is proved analogously.
   
   Assume that inequality~\eqref{eq0n} holds for $n=k$. Consider $n=k+1$. Since
   \[[0;\underbrace{e_{s-1},e_0,\ldots,e_{s-1},e_0}_{2k+2}]=[0;e_{s-1},e_0+u], \mbox{ where }
   u=[0;\underbrace{e_{s-1},e_0,\ldots,e_{s-1},e_0}_{2k}],\]
    \[[0;a_1,b_1,\ldots,a_{k+1},b_{k+1}]=[0;a_1,b_1+c], \mbox{ where }
   c=[0;a_2,b_2,\ldots,a_{k+1},b_{k+1}],\]
     \[[0;\underbrace{e_0,e_{s-1},\ldots,e_0,e_{s-1}}_{2k+2}]=[0;e_{0},e_{s-1}+v], \mbox{ where }
   v=[0;\underbrace{e_0,e_{s-1},\ldots,e_{0},e_{s-1}}_{2k}],\]
   the induction hypothesis gives $u\leq c\leq v$. Therefore,
   \[[0;e_{s-1},e_0+u]\leq [0;a_1,b_1+c]\leq [0;e_0,e_{s-1}+v].\]
   By the principle of mathematical induction, inequality~\eqref{eq0n} holds for every $n$. Passing to the limit in~\eqref{eq0n} yields the double inequality~\eqref{eq0}. Therefore
     \[\min E=\min\limits_{(a_n)\in L_s}\{[0;a_1,a_2,\ldots,a_n,\ldots]\}=[0;(e_{s-1},e_0)]=d_0,\]
   \[\max E=\max\limits_{(a_n)\in L_s}\{[0;a_1,a_2,\ldots,a_n,\ldots]\}=[0;(e_{0},e_{s-1})]=d_1.\]
   The theorem is proved.
  \end{proof}
\begin{corollary}
The following equality holds:
\begin{equation}\label{eq:3}
\dfrac{d_0}{d_1}=\dfrac{e_0}{e_{s-1}} \Leftrightarrow \frac{d_0}{e_0}=\frac{d_1}{e_{s-1}},
\end{equation}
and, in particular, if $e_0e_{s-1}=\frac{1}{2}$ then $d_0=e_0$, $d_1=e_{s-1}$.
\end{corollary}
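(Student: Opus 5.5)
The plan is to read the ratio straight off the closed forms \eqref{eq:1} and \eqref{eq:2} in Theorem~\ref{lem:porivn}. Write $N=\sqrt{e_0e_{s-1}(e_0e_{s-1}+4)}-e_0e_{s-1}$. Then $d_0=N/(2e_{s-1})$ and $d_1=N/(2e_0)$. We have $N>0$, because $e_0e_{s-1}(e_0e_{s-1}+4)>(e_0e_{s-1})^2$, so $d_1\neq 0$. Dividing gives $d_0/d_1=e_0/e_{s-1}$. The equivalence with $d_0/e_0=d_1/e_{s-1}$ is a cross-multiplication of positive quantities: both sides say $d_0e_{s-1}=d_1e_0$.

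As an independent check, I would also derive the ratio from the periodic structure, without the explicit formulas. From $d_1=[0;e_0,e_{s-1}+d_1]$ and $d_0=[0;e_{s-1},e_0+d_0]$ we get the fixed-point equations
\[
d_1=\frac{e_{s-1}+d_1}{e_0(e_{s-1}+d_1)+1}, \qquad
d_0=\frac{e_0+d_0}{e_{s-1}(e_0+d_0)+1}.
\]
Clearing denominators, they become $e_0d_1^2+e_0e_{s-1}d_1-e_{s-1}=0$ and $e_{s-1}d_0^2+e_0e_{s-1}d_0-e_0=0$. Substituting $d_1=(e_{s-1}/e_0)\,t$ into the first equation and multiplying by $e_0/e_{s-1}$ gives exactly the second equation in $t$. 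Both quadratics have exactly one positive root (the product of roots is negative), so $t=d_0$ and $d_1=(e_{s-1}/e_0)\,d_0$.

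For the particular case $e_0e_{s-1}=\tfrac12$, substitute into \eqref{eq:2}. We get $\sqrt{\tfrac12\cdot\tfrac92}=\tfrac32$, so $N=\tfrac32-\tfrac12=1$. Hence $d_1=1/(2e_0)=e_{s-1}$, and $d_0=1/(2e_{s-1})=e_0$.

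There is no real obstacle here. The only points needing care are that $N>0$, so the division is legitimate, and that the correct root of each quadratic is chosen.
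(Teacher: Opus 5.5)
Your proof is correct and matches the paper's reasoning: the paper states this corollary without a separate proof, as an immediate consequence of \eqref{eq:1} and \eqref{eq:2}. The common numerator $\sqrt{p(p+4)}-p$ cancels in $d_0/d_1$ and equals $1$ when $p=\frac12$, which is exactly your argument; your fixed-point cross-check via the two quadratics is also valid but not needed.
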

\begin{corollary}\label{cor2}
If $d_0\nabla e_0$, then $d_1\nabla e_{s-1}$, where $\nabla$ denotes an order relation $(=, <,>)$.
\end{corollary}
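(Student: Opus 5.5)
This is immediate from the preceding corollary, relation~\eqref{eq:3}, which says $d_0/e_0 = d_1/e_{s-1}$. Set $\lambda := d_0/e_0 = d_1/e_{s-1}$; both $d_0,d_1$ (by~\eqref{eq:1},\eqref{eq:2}, or simply because $E\subset(0,\infty)$) and $e_0,e_{s-1}$ are positive, so $\lambda>0$ is well defined. The relation $d_0\nabla e_0$ is then equivalent to $\lambda\nabla 1$, because dividing by the positive number $e_0$ preserves each of $=,<,>$. Likewise $\lambda\nabla 1$ is equivalent to $d_1\nabla e_{s-1}$, after multiplying by $e_{s-1}>0$.

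If one prefers to see~\eqref{eq:3} directly, I would note from the explicit formulas that $d_0 e_{s-1} = d_1 e_0 = \tfrac12\bigl(\sqrt{e_0e_{s-1}(e_0e_{s-1}+4)}-e_0e_{s-1}\bigr)$. Dividing by $e_0e_{s-1}$ gives the equality of ratios.

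It is also worth computing the common value explicitly. With $t=e_0e_{s-1}$ we have $\lambda = (\sqrt{t^2+4t}-t)/(2t)$, and $\lambda\nabla 1$ is equivalent to $\sqrt{t^2+4t}\,\nabla\,3t$, i.e.\ $t^2+4t\,\nabla\,9t^2$, i.e.\ $\tfrac12\,\nabla\, t$. This recovers the special case $e_0e_{s-1}=\tfrac12$ and characterizes all three cases, though it is not needed for the corollary itself. There is no real obstacle: the only point to check is positivity, so that the order relation is preserved under scaling.
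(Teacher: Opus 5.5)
Your proof is correct and follows the paper's route: the paper gives no separate proof and reads the corollary directly off \eqref{eq:3}, i.e.\ off $d_0/e_0=d_1/e_{s-1}>0$, which is exactly your scaling argument. Your extra computation showing that $\lambda\nabla 1$ is equivalent to $\tfrac12\nabla e_0e_{s-1}$ is also correct and reproduces the paper's subsequent lemma.
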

In what follows, we use the abbreviation $e_0e_{s-1}\equiv p$.
\begin{lemma}
1. If $e_0e_{s-1}=\frac{1}{2}$, then $d_0=e_0$ and $d_1=e_{s-1}$.

2. If $e_0e_{s-1}<\frac{1}{2}$, then $d_0>e_0$ and $d_1>e_{s-1}$.

3. If $e_0e_{s-1}>\frac{1}{2}$, then $d_0<e_0$ and $d_1<e_{s-1}$.
\end{lemma}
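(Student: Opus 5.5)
By Corollary~\ref{cor2}, the relation between $d_0$ and $e_0$ carries over to the relation between $d_1$ and $e_{s-1}$. So in each of the three cases it suffices to compare $d_0$ with $e_0$; the statement about $d_1$ then follows automatically. Case~1 is also already contained in the first corollary, but the computation below treats all three cases uniformly.

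Write $p=e_0e_{s-1}$. By \eqref{eq:1},
\[
d_0=\frac{\sqrt{p(p+4)}-p}{2e_{s-1}},
\]
so, multiplying by $2e_{s-1}>0$,
\[
d_0\ \nabla\ e_0 \iff \sqrt{p(p+4)}-p\ \nabla\ 2p \iff \sqrt{p(p+4)}\ \nabla\ 3p .
\]
Both sides of the last relation are positive, so squaring preserves the relation $\nabla$. This gives
\[
p^2+4p\ \nabla\ 9p^2 \iff 4p\ \nabla\ 8p^2 \iff 1\ \nabla\ 2p ,
\]
where the last step divides by $4p>0$. Hence $d_0=e_0$, $d_0>e_0$, $d_0<e_0$ according as $p=\tfrac12$, $p<\tfrac12$, $p>\tfrac12$ respectively.

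Alternatively, one can avoid the closed formula and use the fixed-point equation. Since $d_0=[0;e_{s-1},e_0+d_0]$, the number $d_0$ is the positive root of
\[
e_{s-1}x^2+px-e_0=0 .
\]
The left-hand side is increasing for $x>0$ and equals $e_0(2p-1)$ at $x=e_0$. So $d_0<e_0$ exactly when this value is positive, i.e.\ when $p>\tfrac12$, and symmetrically in the other two cases.

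No step is a real obstacle. The only point needing care is the direction of each equivalence: one must check that $2e_{s-1}>0$, that both sides are positive before squaring, and that $4p>0$ before dividing, so that the relation $\nabla$ is preserved throughout.
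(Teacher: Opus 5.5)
Your proof is correct and follows the paper's own argument: you reduce to comparing $d_0$ with $e_0$ via Corollary~\ref{cor2}, then compare $\sqrt{p(p+4)}$ with $3p$ and square, and your explicit positivity checks make each equivalence slightly more careful than the paper's. Your alternative argument, which evaluates $e_{s-1}x^2+px-e_0$ at $x=e_0$ and uses that this polynomial is increasing for $x>0$, is also valid.
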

\begin{proof}
  In view of Corollary~\ref{cor2}, it suffices to establish the relation between $d_0$ and $e_0$.
 Since $p>0$, the equality $d_0=e_0$ is equivalent to $\sqrt{p(p+4)}-p=2p$, which is equivalent to $p=\frac{1}{2}$.

 The inequality $d_0<e_0$ is equivalent to $\sqrt{p(p+4)}-p<2p$, which is equivalent to $p>\frac{1}{2}$.

 The inequality $d_0>e_{0}$ is equivalent to $\sqrt{p(p+4)}>3p$, which is equivalent to $p<\frac{1}{2}$.
\end{proof}
\begin{lemma} For the endpoints $d_0$ and $d_1$ of the set $E$ of values of $A_s$-fractions, the following relation holds:
\begin{equation}\label{dobk}
  d_0d_1=\frac{1}{2}(p+2-\sqrt{p^2+4p})<1.
\end{equation}
\end{lemma}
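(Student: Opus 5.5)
We argue directly from the explicit formulas \eqref{eq:1} and \eqref{eq:2} of Theorem~\ref{lem:porivn}. Put $S=\sqrt{p(p+4)}=\sqrt{p^2+4p}$. Then $d_0=\frac{S-p}{2e_{s-1}}$ and $d_1=\frac{S-p}{2e_0}$. Since $e_0e_{s-1}=p$, their product is
\[
d_0d_1=\frac{(S-p)^2}{4p}.
\]
Expanding, $(S-p)^2=S^2-2pS+p^2=2p^2+4p-2pS$. Dividing by $4p$ (recall $p>0$) gives
\[
d_0d_1=\frac{2p+4-2S}{4}=\frac12\bigl(p+2-\sqrt{p^2+4p}\bigr),
\]
which is the equality in \eqref{dobk}.

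For the inequality, we need $p+2-\sqrt{p^2+4p}<2$, i.e. $p<\sqrt{p^2+4p}$. Both sides are positive, so this is equivalent to $p^2<p^2+4p$, which holds because $p>0$. The expression is also positive: $(p+2)^2=p^2+4p+4>p^2+4p$. So in fact $0<d_0d_1<1$.

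A quick sanity check uses the first lemma above. When $p=\frac12$ we have $d_0=e_0$ and $d_1=e_{s-1}$, so $d_0d_1=\frac12$. The formula gives $\frac12\bigl(\frac52-\frac32\bigr)=\frac12$, which agrees.

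There is no real obstacle here. The only point needing care is the bookkeeping in the algebra: $e_0e_{s-1}=p$ must be substituted correctly in the denominator $4e_0e_{s-1}$.
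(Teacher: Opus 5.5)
Your proof is correct. The equality is derived exactly as in the paper: substitute the closed forms of $d_0$ and $d_1$, use $e_0e_{s-1}=p$, and expand $(\sqrt{p(p+4)}-p)^2$.

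The routes differ only in the bound $d_0d_1<1$. The paper rewrites $\varphi(p)=d_0d_1$ as $\frac{p+2}{2}\bigl(1-\sqrt{1-\frac{4}{(p+2)^2}}\bigr)$, asserts that $\varphi$ is decreasing on $[0,\infty)$, and concludes $\varphi(p)<\varphi(0)=1$. You instead reduce the bound directly to $p<\sqrt{p^2+4p}$, which is immediate for $p>0$.

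Your route is shorter and fully self-contained. The paper's route gives a little more, namely that $d_0d_1$ decreases as $p$ grows. However, the displayed factorization does not by itself show monotonicity: the factor $\frac{p+2}{2}$ increases while the other factor decreases. So the paper tacitly needs something extra, such as $\varphi'(p)=\frac12\bigl(1-\frac{p+2}{\sqrt{p^2+4p}}\bigr)<0$. Alternatively, the rationalized form $\varphi(p)=\frac{2}{p+2+\sqrt{p^2+4p}}$ makes monotonicity, $\varphi<1$ and $\varphi>0$ evident at once.
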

\begin{proof}
 Indeed, from
  \begin{align*}
    \varphi(p)\equiv&d_0d_1=\frac{(\sqrt{p(p+4)}-p)^2}{4p}=\frac{2p(p+2-\sqrt{p^2+4p})}{4p}= \\
    =&\frac{1}{2}(p+2-\sqrt{p^2+4p})=\frac{p+2}{2}\left(1-\frac{\sqrt{(p+2)^2-4}}{p+2}\right)=\\
    =&\frac{p+2}{2}\left(1-\sqrt{1-\frac{4}{(p+2)^2}}\right),
  \end{align*}
  we see that the function $\varphi(p)=\frac{1}{2}(p+2-\sqrt{p^2+4p})$ is decreasing on $[0;\infty)$. Therefore, since $p>0$ we have $\varphi(p)<\varphi(0)=1$, and hence $d_0d_1<1$. The lemma is proved.
\end{proof}
\begin{lemma} The following inequality holds: $(e_0+d_0)(e_0+d_1)> 1.$
\end{lemma}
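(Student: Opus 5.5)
The plan is to avoid the explicit radical formulas \eqref{eq:1} and \eqref{eq:2}. Instead I will use the self-similar structure of the two extremal periodic fractions, which turns the inequality into an identity plus a trivially positive term.

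\textbf{Step 1: the two fixed-point relations.} By Theorem~\ref{lem:porivn}, $d_0=[0;(e_{s-1},e_0)]$ and $d_1=[0;(e_0,e_{s-1})]$. Removing the first partial quotient from $d_0$ leaves the tail $[0;e_0,e_{s-1},e_0,\ldots]$, which is exactly $d_1$. Likewise, removing the first partial quotient from $d_1$ leaves $d_0$. This gives
\[
d_0=\frac{1}{e_{s-1}+d_1},\qquad d_1=\frac{1}{e_0+d_0}.
\]
To justify passing to the tail, I will use the convergence of every $A_s$-fraction, which the paper already has via Seidel's theorem. Concretely, $[0;a_1,\ldots,a_n]=1/(a_1+[0;a_2,\ldots,a_n])$, and the map $t\mapsto 1/(a_1+t)$ is continuous on $[0,\infty)$, so letting $n\to\infty$ gives the relation for the infinite fraction.

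\textbf{Step 2: substitute.} The second relation says $e_0+d_0=1/d_1$. Therefore
\[
(e_0+d_0)(e_0+d_1)=\frac{e_0+d_1}{d_1}=1+\frac{e_0}{d_1}.
\]
Since $e_0>0$ and $d_1>0$ (indeed $d_1\ge d_0>0$ by \eqref{eq0}), the right-hand side is strictly greater than $1$, which is the claim.

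\textbf{Main obstacle.} The only point needing care is the tail identity in Step 1, that is, making rigorous that the tail of a periodic infinite fraction equals the other periodic fraction. This follows from continuity of $t\mapsto 1/(a+t)$ together with the convergence already established, so I do not expect real difficulty.

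As a fallback, one could plug in the closed forms: with $p=e_0e_{s-1}$, $e_0+d_0=\frac{\sqrt{p^2+4p}+p}{2e_{s-1}}$, and then expand the product directly. That route is messier, so I would use the structural argument above.
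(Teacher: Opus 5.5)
Your proof is correct, and it takes a different route from the paper's. The paper substitutes the closed forms \eqref{eq:1} and \eqref{eq:2} into the product and expands. After some radical bookkeeping it arrives at $(e_0+d_0)(e_0+d_1)=\frac{e_0^2}{2p}\bigl(p+\sqrt{p(p+4)}\bigr)+1$.

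You instead use only the self-similarity $d_1=1/(e_0+d_0)$ of the extremal periodic fractions, and your continuity argument for passing to the tail is sound. This gives the identity $(e_0+d_0)(e_0+d_1)=1+e_0/d_1$. The two expressions agree: rationalizing $e_0/d_1=2e_0^2/\bigl(\sqrt{p(p+4)}-p\bigr)$ gives exactly the paper's excess term. So your identity serves equally well where the lemma is used in the proof of Theorem~\ref{th2}, where the excess enters $r(x)$. Your route is shorter, does not depend on solving the quadratic, and makes the reason for strictness transparent. The same relation also gives the preceding lemma immediately, since $d_0d_1=d_0/(e_0+d_0)<1$. The paper's computation buys only an explicit formula in $e_0$ and $p$.

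One small correction: \eqref{eq0} gives only $d_0\le d_1$, not $d_0>0$. Positivity of $d_1$ follows instead from your own relation $d_1=1/(e_0+d_0)$, because $d_0\ge 0$ as a limit of positive convergents. Alternatively, you can read it off from \eqref{eq:2}.
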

\begin{proof} We transform the left-hand side as follows:
 \begin{align*}
  (e_0+d_0)(e_0+d_1)&=(e_0+\frac{\sqrt{p(p+4)}-p}{2e_{s-1}})(e_0+\frac{\sqrt{p(p+4)}-p}{2e_{0}})=\\
  =&e_0^2+\frac{e_0(\sqrt{p(p+4)}-p)}{2e_{s-1}}+\frac{e_0(\sqrt{p(p+4)}-p)}{2e_{0}}+\frac{1}{2}(p+2-\sqrt{p(p+4)})=\\
  =&e_0^2+\frac{e_0}{2e_{s-1}}\sqrt{p(p+4)}-\frac{e^2_0}{2}+\frac{\sqrt{p(p+4)}}{2}-\frac{p}{2}+\frac{p}{2}+1-\frac{\sqrt{p(p+4)}}{2}= \\
  =&\frac{e^2_0}{2p}(p+\sqrt{p(p+4)})+1> 1. \qedhere
\end{align*}
\end{proof}
\begin{definition}
Let $(c_1,c_2,\ldots,c_m)$ be a fixed finite sequence of elements of the alphabet.
The set
\[
\Delta^{A_s}_{c_1c_2\ldots c_m}
=
\left\{
x:\ x=[0;c_1,c_2,\ldots,c_m,\alpha_1,\alpha_2,\ldots],
\quad (\alpha_n)\in L_s
\right\}
\]
of numbers from the interval $[d_0;d_1]$ is called a \emph{cylinder of rank $m$ with base $c_1c_2\ldots c_m$}.
\end{definition}
Obviously, $\Delta^{A_s}_{c_1c_2\ldots c_mc}
\subset
\Delta^{A_s}_{c_1c_2\ldots c_m}$.
Moreover, $E=\bigcup\limits_{i=0}^{s-1}\Delta_{e_i}^{A_s}$
and, in general,
\[
E=
\bigcup\limits_{c_1=0}^{s-1}\cdots
\bigcup\limits_{c_m=0}^{s-1}
\Delta^{A_s}_{c_1\ldots c_m}.
\]
\begin{definition} The minimal interval containing the cylinder $\Delta^{A_s}_{c_1c_2...c_m}$ is denoted by $\bar{\Delta}^{A_s}_{c_1...c_m}$ and is called the cylindrical interval of rank $m$ with base $c_1c_2...c_m$.
\end{definition} 
Since we consider only infinite continued fractions, it is readily seen that the endpoints of the cylindrical interval $\bar{\Delta}^{A_s}_{c_1...c_m}$, as well as those of the corresponding cylinder, are the points
$[0;c_1,...,c_{m},(e_{s-1},e_0)]=[0;c_1,...,c_m,d_0^{-1}]$
and $[0;c_1,...,c_m,(e_0,e_{s-1})]=[0;c_1,...,c_m,d_1^{-1}]$.

For even ranks, the first point is the left endpoint, whereas for odd ranks, it is the right endpoint.
\begin{lemma}
The length of the cylindrical interval
$\bar{\Delta}^{A_s}_{c_1...c_m}$, which is the diameter of the corresponding cylinder, is given by
\[|\bar{\Delta}^{A_s}_{c_1...c_m}|=
  \frac{d_1-d_0}{(q_m+d_1q_{m-1})(q_m+d_0q_{m-1})},\]
  where $q_m$ is the denominator of the finite continued fraction $[0;c_1,c_2,...,c_m]=\frac{p_m}{q_m}.$
\end{lemma}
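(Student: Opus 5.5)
We use the standard identity for a continued fraction with a real tail. For any $t>0$,
\[
[0;c_1,\ldots,c_m,t]=\frac{p_m t+p_{m-1}}{q_m t+q_{m-1}},
\]
where $p_k,q_k$ are defined by the recurrences stated before the theorem on $\min E$ and $\max E$. These are the recurrences $p_{k+1}=c_{k+1}p_k+p_{k-1}$ and $q_{k+1}=c_{k+1}q_k+q_{k-1}$, with $p_0=0$, $p_1=1$, $q_0=1$, $q_1=c_1$. To match these we set $p_{-1}=1$ and $q_{-1}=0$. The identity then holds for $m\ge 1$ by induction on $m$: replacing $t$ by $c_{m+1}+1/t$ reproduces the recurrence.

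We also need the determinant identity $p_m q_{m-1}-p_{m-1}q_m=(-1)^{m-1}$. It follows by induction from the same recurrences, with base case $p_1q_0-p_0q_1=1$. Both facts are purely algebraic and hold for real partial quotients, so they apply to the alphabet $A_s$.

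By the remark preceding the lemma, the endpoints of $\bar\Delta^{A_s}_{c_1\ldots c_m}$ are
\[
x_0=[0;c_1,\ldots,c_m,d_0^{-1}],\qquad x_1=[0;c_1,\ldots,c_m,d_1^{-1}].
\]
We rely on the fact that $[0;c_1,\ldots,c_m,(e_{s-1},e_0)]$ equals this with tail $t=1/d_0$, since the tail value is $[0;(e_{s-1},e_0)]=d_0$.

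Put $t_i=1/d_i$. The length is
\[
|x_1-x_0|=\left|\frac{p_mt_1+p_{m-1}}{q_mt_1+q_{m-1}}-\frac{p_mt_0+p_{m-1}}{q_mt_0+q_{m-1}}\right|
=\frac{|p_mq_{m-1}-p_{m-1}q_m|\,|t_1-t_0|}{(q_mt_1+q_{m-1})(q_mt_0+q_{m-1})}.
\]
The determinant identity gives $|p_mq_{m-1}-p_{m-1}q_m|=1$. Substituting $t_i=1/d_i$ and multiplying numerator and denominator by $d_0d_1$, we get
\[
|t_1-t_0|=\frac{d_1-d_0}{d_0d_1},\qquad (q_mt_1+q_{m-1})(q_mt_0+q_{m-1})=\frac{(q_m+d_1q_{m-1})(q_m+d_0q_{m-1})}{d_0d_1}.
\]
The factors $d_0d_1$ cancel, which yields exactly the claimed formula.

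The main point requiring care is that the tail of the periodic fraction is legitimately replaced by the real number $d_0^{-1}$. This uses continuity of the map $t\mapsto [0;c_1,\ldots,c_m,t]$, a Möbius map with positive denominator for $t>0$, together with convergence of the fractions by Seidel's theorem. The other point is the indexing convention $q_0=1$, $q_{-1}=0$, which makes the formula also valid for $m=1$. In that case it reads $(d_1-d_0)/\bigl((c_1+d_1)(c_1+d_0)\bigr)$, which agrees with direct computation of $1/(c_1+d_0)-1/(c_1+d_1)$. Everything else is a routine algebraic identity.
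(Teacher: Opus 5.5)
Your proof is correct and follows essentially the same route as the paper. Both write the two endpoints as Möbius images $\frac{p_m t+p_{m-1}}{q_m t+q_{m-1}}$ with $t=d_0^{-1}$ and $t=d_1^{-1}$, subtract them, and use $|p_mq_{m-1}-p_{m-1}q_m|=1$; you additionally justify the tail substitution and prove the tail identity and the determinant identity by induction, which the paper takes for granted.
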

\begin{proof}
By definition, the length of the cylindrical interval (the diameter of the cylinder) is
\begin{align*}
  |\bar{\Delta}^{A_s}_{c_1...c_m}|&=\max\bar{\Delta}^{A_s}_{c_1...c_m}-\min\bar{\Delta}^{A_s}_{c_1...c_m}=\\
  &=|[0;a_1,...,a_m,d_1^{-1}]-[0;a_1,...,a_m,d_0^{-1}]|=\\
  &=|\frac{d_1^{-1}p_m+p_{m-1}}{d_1^{-1}q_m+q_{m-1}}-\frac{d_0^{-1}p_m+p_{m-1}}{d_0^{-1}q_m+q_{m-1}}|=\\
  &=\frac{|(p_mq_{m-1}-p_{m-1}q_m)(d_1-d_0)|}{(q_m+d_1q_{m-1})(q_m+d_0q_{m-1})}=\\
  &=\frac{d_1-d_0}{(q_m+d_1q_{m-1})(q_m+d_0q_{m-1})},
  \end{align*}
since $q_mp_{m-1}-p_mq_{m-1}=(-1)^m$, where $p_m$ is the numerator of the $m$-th convergent.
\end{proof}
\begin{corollary}
   The following estimates hold:
  \[|\bar{\Delta}^{A_s}_{c_1...c_m}|<
  \frac{d_1-d_0}{(q_m+d_0q_{m-1})^2}<\frac{d_1-d_0}{q_m^2}\to 0 (m\to\infty).\]
\end{corollary}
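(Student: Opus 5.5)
The corollary follows directly from the formula of the preceding lemma,
\[
|\bar{\Delta}^{A_s}_{c_1...c_m}|=\frac{d_1-d_0}{(q_m+d_1q_{m-1})(q_m+d_0q_{m-1})},
\]
by bounding the denominator from below in two stages and then showing that $q_m\to\infty$.

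\textbf{First inequality.} Since $d_0<d_1$ (strictly, because $e_0<e_{s-1}$ and Theorem~\ref{lem:porivn} gives distinct extreme values) and $q_{m-1}>0$, we have
\[
q_m+d_1q_{m-1}>q_m+d_0q_{m-1}.
\]
Hence the product in the denominator exceeds $(q_m+d_0q_{m-1})^2$, which gives the first strict inequality. The case $m=1$ needs a remark: there $q_0=1>0$, so the argument still applies.

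\textbf{Second inequality.} Since $d_0>0$ and $q_{m-1}\ge 1>0$, we get $q_m+d_0q_{m-1}>q_m$, and squaring gives the bound $(d_1-d_0)/q_m^2$. Positivity of all $q_k$ follows at once from the recurrence $q_{k+1}=c_{k+1}q_k+q_{k-1}$ with $q_0=1$, $q_1=c_1>0$ and all $c_k>0$.

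\textbf{Limit.} This is the only step needing care, since the $c_k$ are positive reals that may be less than $1$, so the usual integer-based argument for $q_m\to\infty$ does not apply directly. I plan to use the recurrence together with the monotonicity it implies. From $q_{k+1}\ge q_{k-1}+e_0q_k$ we see that $q_{k+1}>q_{k-1}$. Let $\mu=\min(q_0,q_1)=\min(1,c_1)\ge\min(1,e_0)>0$. By induction on this monotonicity, $q_k\ge\mu$ for all $k$. Therefore
\[
q_{k+1}\ge q_{k-1}+e_0\mu,
\]
so each of the two subsequences $(q_{2j})$ and $(q_{2j+1})$ grows at least linearly in $j$. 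This gives $q_m\ge c\,m$ for some constant $c>0$, so $q_m\to\infty$ and $(d_1-d_0)/q_m^2\to0$.
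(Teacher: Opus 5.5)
Your argument is correct and matches the reasoning the paper leaves implicit. The paper states the corollary as immediate from the length formula (via $d_0<d_1$ and $d_0q_{m-1}>0$) and simply asserts $q_m\to\infty$; your parity-subsequence bound $q_{k+1}\ge q_{k-1}+e_0\mu$ legitimately fills in that step, which is needed because the alphabet need not consist of integers. One small slip: $q_{m-1}\ge 1$ is false in general, since odd-indexed denominators such as $q_1=c_1$ can be less than $1$. Only $q_{m-1}>0$ is needed, though, and you do prove that from the recurrence.
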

\begin{corollary}
  The main metric relation for the $A_s$-representation is given by
 \[  \frac{|\bar{\Delta}^{A_s}_{c_1...c_mc}|}{|\bar{\Delta}^{A_s}_{c_1...c_m}|}=
  \frac{(q_m+d_1q_{m-1})(q_m+d_0q_{m-1})}{((c+d_1)q_m+q_{m-1})((c+d_0)q_m+q_{m-1})}=
  \frac{(1+d_1\frac{q_{m-1}}{q_m})(1+d_0\frac{q_{m-1}}{q_m})}{(c+d_1+\frac{q_{m-1}}{q_m})(c+d_0+\frac{q_{m-1}}{q_m})}.\]
\end{corollary}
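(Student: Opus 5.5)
The plan is to obtain the first equality directly from the preceding Lemma and then divide through by $q_m^2$. By that Lemma,
\[
|\bar{\Delta}^{A_s}_{c_1...c_m}|=\frac{d_1-d_0}{(q_m+d_1q_{m-1})(q_m+d_0q_{m-1})},
\]
and the same formula holds at rank $m+1$ with $q_{m+1},q_m$ in place of $q_m,q_{m-1}$. The quantity $d_1-d_0$ does not depend on the rank, so it cancels in the quotient. We get
\[
\frac{|\bar{\Delta}^{A_s}_{c_1...c_mc}|}{|\bar{\Delta}^{A_s}_{c_1...c_m}|}=\frac{(q_m+d_1q_{m-1})(q_m+d_0q_{m-1})}{(q_{m+1}+d_1q_{m})(q_{m+1}+d_0q_{m})}.
\]

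Next, substitute the recurrence $q_{m+1}=c\,q_m+q_{m-1}$, where the new digit $c$ plays the role of $a_{m+1}$. Then
\[
q_{m+1}+d_iq_m=(c+d_i)q_m+q_{m-1},\qquad i=0,1,
\]
which is exactly the denominator in the middle expression of the statement. For the last expression, divide each of the four linear factors by $q_m$, which is positive since all $e_i>0$. The factor $q_m^2$ cancels between numerator and denominator, and each factor becomes linear in $q_{m-1}/q_m$, as written.

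The only step needing care is the edge case $m=1$, where $q_{m-1}=q_0=1$. Here the rank-$0$ "cylinder" is all of $E$, and $q_0=1$, $q_{-1}=0$ must be consistent with the length formula. For $m\ge1$ the formula applies with the paper's conventions $q_0=1$, $q_1=a_1$, so the recurrence $q_2=cq_1+q_0$ holds and nothing else is needed. This is a direct algebraic consequence, and I expect no real obstacle.
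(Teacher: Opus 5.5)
Your proof is correct and is exactly the computation the paper leaves implicit: divide the length formula of the preceding lemma at rank $m+1$ by the one at rank $m$, cancel $d_1-d_0$, substitute $q_{m+1}=cq_m+q_{m-1}$, and divide numerator and denominator by $q_m^2$. Your edge-case remark mixes up two cases: $m=1$ needs only $q_0=1$, $q_1=a_1$, whereas the rank-$0$ case $m=0$ would need the extra convention $q_{-1}=0$. This does not affect the argument for $m\ge 1$.
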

\begin{theorem}\label{th2}
For every $(c_1,...,c_m)\in A_s^m$ and $c\in A_s$ there exist constants $C_1$ and $C_2$ such that
\[0<C_1<\frac{|\bar{\Delta}^{A_s}_{c_1...c_mc}|}
{|\bar{\Delta}^{A_s}_{c_1...c_m}|}<C_2<1.\]
\end{theorem}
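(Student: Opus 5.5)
Starting from the main metric relation in the preceding corollary, put $t=\frac{q_{m-1}}{q_m}$. The ratio becomes
\[
R(t,c)=\frac{(1+d_1t)(1+d_0t)}{(c+d_1+t)(c+d_0+t)}.
\]
The plan is to bound $t$ uniformly and then bound $R$ on the resulting compact set. The recursion $q_m=c_mq_{m-1}+q_{m-2}$ gives $\frac{q_m}{q_{m-1}}=c_m+\frac{q_{m-2}}{q_{m-1}}$, so $t=[0;c_m,c_{m-1},\ldots,c_1]$ is a finite continued fraction with partial quotients in $A_s$; for $m=1$ it is $t=1/c_1$, and for $m=0$ it is $t=0$. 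Since $c_m\ge e_0$ and the tail is positive, $t<1/e_0$. Since $c_m\le e_{s-1}$ and $\frac{q_{m-2}}{q_{m-1}}\le 1/e_0$ (by the same argument one step down), $t\ge 1/(e_{s-1}+1/e_0)>0$ for $m\ge1$. So for $m\ge1$, $t$ ranges in a compact interval $[\tau_0,\tau_1]\subset(0,\infty)$, and $c$ ranges over the finite set $A_s$. I could instead use the sharper bounds $d_0\le t\le d_1$ obtained from Theorem~\ref{lem:porivn} by truncation, but the crude bounds should be enough.

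\textbf{Lower bound.} $R$ is continuous and strictly positive on $[0,1/e_0]\times A_s$, a compact set (including $t=0$ also covers $m=0$ if needed). Hence $C_1$ can be any positive number below $\min R$. An explicit choice is
\[
C_1=\frac{1}{(e_{s-1}+d_1+1/e_0)^2},
\]
halved if a strict inequality is wanted.

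\textbf{Upper bound.} This is the main obstacle: one must show $R<1$ uniformly, which comes down to
\[
(1+d_1t)(1+d_0t)<(c+d_1+t)(c+d_0+t).
\]
Since the right side increases in $c$, it suffices to take $c=e_0$. Expanding both sides, the claim becomes
\[
1+(d_0+d_1)t+d_0d_1t^2<(e_0+d_0)(e_0+d_1)+(2e_0+d_0+d_1)t+t^2 .
\]
This follows termwise:
\begin{itemize}
\item $1<(e_0+d_0)(e_0+d_1)$ by the lemma on $(e_0+d_0)(e_0+d_1)$;
\item $(d_0+d_1)t\le(2e_0+d_0+d_1)t$ trivially;
\item $d_0d_1t^2<t^2$ by the lemma giving $d_0d_1<1$.
\end{itemize}
So $R(t,c)<1$ for every $t\ge0$. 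Continuity on the compact set $[0,1/e_0]\times A_s$ then makes the maximum $M$ strictly less than $1$, and any $C_2\in(M,1)$ works.

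Finally, I would remark that $C_1$ and $C_2$ depend only on the alphabet, not on $m$, $c_1,\ldots,c_m$ or $c$, which is the point of the theorem. If an explicit $C_2$ is desired, the strict gap can be quantified as follows. The difference of the two sides is at least
\[
\frac{e_0^2}{2p}\bigl(p+\sqrt{p(p+4)}\bigr),
\]
while the right side is at most $(e_{s-1}+d_1+1/e_0)^2$. This gives
\[
C_2=1-\frac{e_0^2\bigl(p+\sqrt{p(p+4)}\bigr)}{2p\,(e_{s-1}+d_1+1/e_0)^2}.
\]
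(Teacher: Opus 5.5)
Your proof is correct and follows the paper's argument essentially step for step. It uses the same substitution $t=q_{m-1}/q_m$ confined to the compact range $[\frac{e_0}{p+1},\frac{1}{e_0}]$ (your crude bounds give exactly the paper's range), the same reduction of the upper bound to $c=e_0$ with the termwise comparison based on $d_0d_1<1$ and $(e_0+d_0)(e_0+d_1)>1$, and the same extreme-value argument for a uniform $C_2<1$ (your cruder $C_1$ and your explicit $C_2$ are harmless variants).

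There are two slips, and you rely on neither. First, for $m=1$, $c_1=e_0$ one has $t=1/e_0$, so only $t\le 1/e_0$ holds. Second, your aside $d_0\le t\le d_1$ is false: for example $1/e_0>d_1$, and $[0;e_{s-1},e_0]<d_0$, because the finite fraction $[0;c_m,\ldots,c_1]$ is not bracketed by the infinite extremes.
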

\begin{proof}
Since
$\frac{q_{m-1}}{q_m}=[0;a_m,a_{m-1},\ldots,a_1]
\in[\frac{e_0}{p+1};\frac{1}{e_0}]$
we have
\begin{equation}\label{c1}
      \frac{|\bar{\Delta}^{A_s}_{c_1...c_mc}|}
{|\bar{\Delta}^{A_s}_{c_1...c_m}|}>
  \left(\frac{1+d_0\frac{q_{m-1}}{q_m}}{c+d_1+\frac{q_{m-1}}
  {q_m}}\right)^2>
  \left(\frac{1+\frac{d_0e_0}{e_0e_{s-1}+1}}{e_{s-1}+d_1+\frac{1}
  {e_0}}\right)^2=C_1>0.
  \end{equation}
It remains to show that the fundamental metric ratio is bounded away from $1$. Clearly,
\[\frac{|\bar{\Delta}^{A_s}_{c_1...c_mc}|}
{|\bar{\Delta}^{A_s}_{c_1...c_m}|}=\frac{(1+d_1\frac{q_{m-1}}{q_m})(1+d_0\frac{q_{m-1}}{q_m})}{(c+d_1+\frac{q_{m-1}}{q_m})(c+d_0+\frac{q_{m-1}}{q_m})}<
\frac{(1+d_1\frac{q_{m-1}}{q_m})
(1+d_0\frac{q_{m-1}}{q_m})}
{(e_0+d_1+\frac{q_{m-1}}{q_m})
(e_0+d_0+\frac{q_{m-1}}{q_m})}.\]
Since
$\frac{q_{m-1}}{q_m}\in[\frac{e_0}{p+1};\frac{1}{e_0}]$
consider the function
$\gamma(x)=\frac{(1+d_1x)(1+d_0x)}{(e_0+d_1+x)(e_0+d_0+x)}$
defined on the interval
$[\frac{e_0}{p+1};\frac{1}{e_0}]$
Using inequality~\eqref{dobk}, we obtain
\begin{align*}
     \gamma(x)=& \frac{x^2d_0d_1+x(d_0+d_1)+1}{x^2+x(2e_0+d_0+d_1)+(e_0+d_0)(e_0+d_1)}<\\
  <& \frac{x^2+x(d_0+d_1)+1}{x^2+x(2e_0+d_0+d_1)+(e_0+d_0)(e_0+d_1)}=\\
  =&\frac{x^2+x(2e_0+d_0+d_1)+(e_0+d_0)(e_0+d_1)-2e_0x-(e_0+d_0)(e_0+d_1)+1}{x^2+x(2e_0+d_0+d_1)+(e_0+d_0)(e_0+d_1)}=\\
  =&1-\frac{2e_0x+(e_0+d_0)(e_0+d_1)-1}{x^2+x(2e_0+d_0+d_1)+(e_0+d_0)(e_0+d_1)}=\\
  =&1-\frac{2e_0x+\frac{e^2_0}{2p}(p+\sqrt{p(p+4)})+1-1}{x^2+x(2e_0+d_0+d_1\frac{e^2_0}{2p}(p+\sqrt{p(p+4)})+1}=\\
  =&1-\frac{2e_0x+\frac{e^2_0}{2p}(p+\sqrt{p(p+4)})}{x^2+x(2e_0+d_0+d_1)+\frac{e^2_0}{2p}(p+\sqrt{p(p+4)})+1}.
\end{align*}
By the preceding lemma,
$(e_0+d_0)(e_0+d_1)
=\frac{e_0^2}{2p}
\left(p+\sqrt{p(p+4)}\right)+1.$
Hence, putting
\[
r(x)=\frac{2e_0x+\frac{e^2_0}{2p}
(p+\sqrt{p(p+4)})}{x^2+x(2e_0+d_0+d_1)+
\frac{e^2_0}{2p}(p+\sqrt{p(p+4)})+1},
\]
we have
\[
\gamma(x)<1-r(x).
\]
The function $r(x)$ is continuous and strictly positive on 
$[\frac{e_0}{p+1},\frac{1}{e_0}]$. Therefore, by the Weierstrass extreme value theorem, it attains its minimum
\[m_0\equiv\min\limits_{x\in [\frac{e_0}{p+1},\frac{1}{e_0}]}r(x).\]
Since
$r(x)>0$ on this interval, we have $m_0>0$.
Consequently,
\[
\frac{|\bar{\Delta}^{A_s}_{c_1...c_mc}|}
{|\bar{\Delta}^{A_s}_{c_1...c_m}|}<\gamma(x)<1-m_0
=C_2<1.\qedhere\]
\end{proof}
\begin{corollary}
For any sequence $(c_n)\in L_s$, the following equality holds:
$$\bigcap\limits_{n=1}^{\infty}
\overline{\Delta}^{A_s}_{c_1c_2...c_n}\equiv\Delta^{A_s}_{c_1c_2...c_n...}=\bigcap\limits_{n=1}^{\infty}\Delta^{A_s}_{c_1...c_n}.$$

Thus, every point of the set $E$ can be regarded as a ``cylinder of infinite rank'', and we obtain another representation of the number
$\Delta^{A_s}_{c_1c_2...c_n...}$.
\end{corollary}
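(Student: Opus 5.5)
The corollary has two parts: the intersection of the nested cylindrical intervals is a single point, and the intersections of the closed intervals and of the cylinders both equal $\{x\}$ with $x=[0;c_1,c_2,\ldots]$. The plan is to combine the nested structure of cylinders with the vanishing of their diameters. The symbol $\Delta^{A_s}_{c_1c_2\ldots c_n\ldots}$ on the left is read as the point $x$.

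First, the family is nested. Since $\Delta^{A_s}_{c_1\ldots c_nc_{n+1}}\subset\Delta^{A_s}_{c_1\ldots c_n}$, the minimal interval containing the smaller cylinder lies inside the minimal interval containing the larger one. Hence $\bar{\Delta}^{A_s}_{c_1\ldots c_{n+1}}\subset\bar{\Delta}^{A_s}_{c_1\ldots c_n}$.

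Second, these intervals are closed, since their endpoints $[0;c_1,\ldots,c_n,d_0^{-1}]$ and $[0;c_1,\ldots,c_n,d_1^{-1}]$ are values of $A_s$-fractions and so belong to the cylinder. Their lengths tend to zero: by the estimate $|\bar{\Delta}^{A_s}_{c_1\ldots c_n}|<(d_1-d_0)/q_n^2$ and the fact that $q_n\to\infty$, or directly from Theorem~\ref{th2}, which gives $|\bar{\Delta}^{A_s}_{c_1\ldots c_n}|\le C_2^{\,n-1}(d_1-d_0)\to0$. The Cantor nested-interval theorem then makes $\bigcap_n\bar{\Delta}^{A_s}_{c_1\ldots c_n}$ a single point.

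Third, this point is $x$. For every $n$ we have $x\in\Delta^{A_s}_{c_1\ldots c_n}\subset\bar{\Delta}^{A_s}_{c_1\ldots c_n}$, taking the tail $(\alpha_k)=(c_{n+k})$ in the definition of the cylinder. So $x$ lies in both intersections. The chain
\[
\{x\}\subset\bigcap_n\Delta^{A_s}_{c_1\ldots c_n}\subset\bigcap_n\bar{\Delta}^{A_s}_{c_1\ldots c_n}=\{x\}
\]
then forces all three sets to coincide, which is the claimed equality.

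The only point needing care is $q_n\to\infty$ if one uses the first estimate rather than Theorem~\ref{th2}. It follows from $q_{n+1}=c_{n+1}q_n+q_{n-1}\ge e_0q_n+q_{n-1}$ with all $q_k>0$. One checks $q_{n+1}\ge q_{n-1}+e_0q_n$ and iterates two steps to get $q_{n+2}\ge q_n+e_0q_{n+1}\ge q_n+e_0q_{n-1}$, so the $q$'s grow at least linearly along each parity class. Using Theorem~\ref{th2} avoids this step entirely, so I would cite it.
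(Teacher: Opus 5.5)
Your argument is correct and matches what the paper intends. The paper states this as a corollary with no written proof, placed right after the estimate $|\bar{\Delta}^{A_s}_{c_1\ldots c_m}|<(d_1-d_0)/q_m^2$ and Theorem~\ref{th2}: nested closed cylindrical intervals whose diameters tend to zero meet in a single point, and that point must be $[0;c_1,c_2,\ldots]$ because it lies in every cylinder. Your checks that each $\bar{\Delta}^{A_s}_{c_1\ldots c_n}$ is closed (its endpoints are attained values) and that $q_n\to\infty$ fill in details the paper leaves implicit.
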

\begin{theorem}\label{th1}
The set $E$ of values of all infinite $A_s$-continued fractions is bounded, perfect, and has the cardinality of the continuum.
\end{theorem}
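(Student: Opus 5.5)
Boundedness follows at once from Theorem~\ref{lem:porivn}: every value of an $A_s$-fraction lies in $[d_0;d_1]$, so $E\subset[d_0;d_1]$, and both endpoints belong to $E$.

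For the remaining properties I would work with the coding map $f\colon L_s\to E$, $f((a_n))=[0;a_1,a_2,\ldots]$, where $L_s$ carries the product (Tikhonov) topology of discrete factors. The first step is to show that $f$ is continuous. If two sequences agree in their first $m$ coordinates $c_1,\ldots,c_m$, then both values lie in the cylindrical interval $\bar\Delta^{A_s}_{c_1\ldots c_m}$. By the corollary bounding cylinder lengths, the distance between them is therefore less than $(d_1-d_0)/q_m^2$. Since $q_m\ge \min(e_0,1)\cdot(\ldots)$ grows at least like the denominators of a convergent continued fraction (one can also use Theorem~\ref{th2}, which gives $|\bar\Delta_{c_1\ldots c_m}|\le C_2^m(d_1-d_0)$), this bound tends to $0$ uniformly in the base. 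Hence $f$ is even uniformly continuous. Because $L_s$ is compact, $E=f(L_s)$ is compact, and in particular closed.

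Next I show that $E$ has no isolated points. Take $x=f((c_n))\in E$ and $\varepsilon>0$. Choose $m$ with $C_2^m(d_1-d_0)<\varepsilon$. The cylinder $\Delta^{A_s}_{c_1\ldots c_m}$ contains $x$ and has diameter less than $\varepsilon$. It also contains at least two distinct points, namely $[0;c_1,\ldots,c_m,d_0^{-1}]$ and $[0;c_1,\ldots,c_m,d_1^{-1}]$. These differ because the cylinder's length is positive by the length lemma (here $d_1>d_0$ since $e_0<e_{s-1}$). So at least one of them is a point of $E$ different from $x$ within distance $\varepsilon$. Closed together with no isolated points gives perfect.

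For cardinality, the upper bound $|E|\le|L_s|=\mathfrak c$ is immediate from surjectivity of $f$. For the lower bound, I can either cite the classical fact that a nonempty perfect subset of $\mathbb R$ has the cardinality of the continuum, or give a direct injection, which I find cleaner. Restrict to sequences built from the blocks $B_0=(e_0,e_0)$ and $B_1=(e_{s-1},e_{s-1})$, which begin at odd positions. Two such sequences first differ at some odd position $2k+1$, after a common prefix. Both tails $[0;e_0,e_0,\ldots]$ and $[0;e_{s-1},e_{s-1},\ldots]$ are values of elements of $E$. I need them to be distinct; this holds because the tails lie in disjoint parts of the first-rank cylinder structure, which is what I would have to check.

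Checking that the two first-rank cylinders $\Delta_{e_0}$ and $\Delta_{e_{s-1}}$ are disjoint is the main obstacle, because in general they are not. Their values $1/(e_0+t)$ and $1/(e_{s-1}+t)$, with $t\in E$, can overlap. I would therefore avoid this route and argue instead via the perfect-set theorem, or by a Cantor-scheme construction. In the latter, I repeatedly pick, inside each chosen cylinder, two disjoint closed subintervals each meeting $E$; these exist by perfectness and compactness. This yields $2^{\aleph_0}$ distinct limit points in $E$.
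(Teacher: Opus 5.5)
Your proof is correct. For the absence of isolated points and for the cardinality it is essentially the paper's argument. The paper also picks whichever endpoint $[0;\alpha_1,\ldots,\alpha_n,(e_{s-1},e_0)]$ or $[0;\alpha_1,\ldots,\alpha_n,(e_0,e_{s-1})]$ of the rank-$n$ cylinder differs from $x_0$. It then simply invokes the fact that a nonempty perfect subset of $\mathbb R$ has the cardinality of the continuum. You were right to abandon the block injection, since the cylinders it relies on can overlap.

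The genuine difference is in how you prove closedness. The paper writes $E=\bigcap_m E_m$, where $E_m$ is the union of all rank-$m$ cylindrical intervals, and gets closedness from this decreasing intersection of finite unions of closed segments. You instead obtain $E$ as the continuous image of the compact space $L_s$ under the coding map.

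Your route is more self-contained. The inclusion $\bigcap_m E_m\subset E$, which the paper states without comment, actually needs a compactness (diagonal-extraction) argument: one must find a single nested chain $\bar\Delta^{A_s}_{c_1}\supset\bar\Delta^{A_s}_{c_1c_2}\supset\cdots$ containing a given point. Tychonoff's theorem does that work for you, and it also gives $|E|\le|L_s|$ for free. The paper's route, in turn, produces the Cantor-type description $E=\bigcap_m E_m$, which it reuses later to compute $\lambda(E)=\lim_m\lambda(E_m)$ in the measure-zero theorem.

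A minor remark: continuity of your coding map only requires $|\bar\Delta^{A_s}_{c_1\ldots c_m}|\to0$ along each fixed sequence. The corollary $|\bar\Delta^{A_s}_{c_1\ldots c_m}|<(d_1-d_0)/q_m^2\to0$ already provides this, so your vaguely stated uniform bound on $q_m$ is unnecessary. Also, Theorem~\ref{th2} only compares ranks $m+1$ and $m\ge1$, so it would give $C_2^{m-1}$ rather than $C_2^m$.
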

\begin{proof}
The boundedness of the set $E$ was established above:
$E\subseteq [d_0,d_1].$

Consider the sequence of sets $E_m$, where
\[E_m\equiv\bigcup\limits_{c_1\in A_s}\bigcup\limits_{c_2\in A_s}\ldots\bigcup\limits_{c_m\in A_s}\overline{\Delta}^{A_s}_{c_1c_2\ldots c_m}.\]
Each set $E_m$ is a finite union of closed intervals and is therefore perfect and bounded. Moreover $E_{m+1}\subset E_m$, $m\in N$.

Then
\[E=\bigcap\limits_{m=1}^{\infty}\left[\bigcup\limits_{c_1\in A_s}\bigcup\limits_{c_2\in A_s}\ldots\bigcup\limits_{c_m\in A_s}\overline{\Delta}^{A_s}_{c_1c_2\ldots c_m}\right]=\bigcap\limits_{m=1}^{\infty}E_m.\]

According to the theorem on the structure of closed sets, the set $E$ is closed.

We now show that $E$ has no isolated points.
Let $x_0$ be an arbitrary point of $E$ and let
$x_0=\Delta^{A_s}_{\alpha_1\ldots\alpha_n\ldots}$.
Then, for every $n\in\mathbb{N}$, $x_0$ belongs to the
cylindrical interval
$\overline{\Delta}^{A_s}_{\alpha_1\ldots\alpha_n}$
and is either an interior point or one of its endpoints:
$
x_n=\Delta^{A_s}_{\alpha_1\ldots\alpha_n(e_{s-1}e_0)}$ and
$x'_n=\Delta^{A_s}_{\alpha_1\ldots\alpha_n(e_0e_{s-1})}.$
Consider the sequence
\[
u_n=
\begin{cases}
x_n, & \text{if } x_n\ne x_0,\\
x'_n, & \text{if } x'_n\ne x_0,
\end{cases}
\qquad n\in\mathbb{N}.
\]
Clearly, $u_n\in E$ for every $n$. Since
\[
x_0\in\Delta^{A_s}_{\alpha_1\ldots\alpha_n}
\subseteq
\overline{\Delta}^{A_s}_{\alpha_1\ldots\alpha_n}
\]
and
\[
u_n\in\Delta^{A_s}_{\alpha_1\ldots\alpha_n}
\subseteq
\overline{\Delta}^{A_s}_{\alpha_1\ldots\alpha_n},
\]
while
\[
|
\overline{\Delta}^{A_s}_{\alpha_1\ldots\alpha_n}
|
=
d(\Delta^{A_s}_{\alpha_1\ldots\alpha_n})
\longrightarrow 0
\qquad (n\to\infty),
\]
we obtain
\[
\lim_{n\to\infty}u_n=x_0.
\]
Moreover, by construction, $u_n\ne x_0$. Hence, $x_0$ is an
accumulation point of $E$. Since $x_0$ was arbitrary, $E$ has no
isolated points.
 Together with the closedness of $E$ this implies that $E$ is a perfect set.

Since every nonempty perfect subset of $\mathbb R$ has cardinality continuum, the set $E$ has cardinality continuum.
\end{proof}
 \begin{lemma}\label{lemA2}
If, for some $i\in\{1,...,s-1\}$  $$e_{i}-e_{i-1}>d_1-d_0,$$ then the interval $(\frac{1}{e_i+d_0};\frac{1}{e_{i-1}+d_1})\subset[d_0;d_1]$ is a gap of $E$.
\end{lemma}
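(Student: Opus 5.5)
The plan is to use the decomposition $E=\bigcup_{j=0}^{s-1}\Delta^{A_s}_{e_j}$ and locate each first-rank cylinder precisely. Every $x\in E$ can be written as $x=\frac{1}{a_1+y}$ with $a_1\in A_s$ and $y=[0;a_2,a_3,\ldots]\in E$. By Theorem~\ref{lem:porivn}, $y\in[d_0;d_1]$. Hence
\[
\Delta^{A_s}_{e_j}\subseteq\Bigl[\frac{1}{e_j+d_1};\frac{1}{e_j+d_0}\Bigr],
\]
and the endpoints are attained, namely at $[0;e_j,(e_0,e_{s-1})]$ and $[0;e_j,(e_{s-1},e_0)]$. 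Since $t\mapsto \frac{1}{t+y}$ is decreasing, larger digits give cylinders lying further to the left.

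The key step is to show that the hypothesis $e_i-e_{i-1}>d_1-d_0$ separates $\bar\Delta^{A_s}_{e_i}$ from $\bar\Delta^{A_s}_{e_{i-1}}$. The inequality $\frac{1}{e_i+d_0}<\frac{1}{e_{i-1}+d_1}$ is equivalent to $e_{i-1}+d_1<e_i+d_0$, which is exactly the hypothesis. So the open interval $I=(\frac{1}{e_i+d_0};\frac{1}{e_{i-1}+d_1})$ is nonempty. We then check that $I$ meets no cylinder of first rank:
\begin{itemize}
\item For $j\ge i$ we have $e_j\ge e_i$, so $\max\Delta^{A_s}_{e_j}=\frac{1}{e_j+d_0}\le \frac{1}{e_i+d_0}$, and the cylinder lies to the left of $I$.
\item For $j\le i-1$ we have $\min\Delta^{A_s}_{e_j}=\frac{1}{e_j+d_1}\ge\frac{1}{e_{i-1}+d_1}$, and the cylinder lies to the right of $I$.
\end{itemize}
Therefore $I\cap E=\varnothing$.

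Next, $I\subset[d_0;d_1]$ holds because both endpoints of $I$ belong to $E\subseteq[d_0;d_1]$. Indeed, $\frac{1}{e_i+d_0}=[0;e_i,(e_{s-1},e_0)]\in E$ and $\frac{1}{e_{i-1}+d_1}=[0;e_{i-1},(e_0,e_{s-1})]\in E$. Since the endpoints lie in $E$ and the open interval between them misses $E$, $I$ is a maximal complementary interval inside $[d_0;d_1]$, i.e.\ a gap of $E$.

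The only delicate point is identifying the exact extreme values of a first-rank cylinder. This needs monotonicity of $y\mapsto 1/(e_j+y)$ together with the attainment of $\min E$ and $\max E$ from Theorem~\ref{lem:porivn}. I expect this to be routine, so the argument is essentially a direct comparison.
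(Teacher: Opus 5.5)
Your proof is correct and follows essentially the same route as the paper: you locate each first-rank cylinder via $\Delta^{A_s}_{e_j}\subseteq[\frac{1}{e_j+d_1};\frac{1}{e_j+d_0}]$ with attained endpoints, convert the hypothesis into $\frac{1}{e_i+d_0}<\frac{1}{e_{i-1}+d_1}$, and note that both endpoints of the interval lie in $E$. Your explicit check of all $j\ge i$ and all $j\le i-1$ simply makes rigorous the paper's brief appeal to the first-rank cylinders occurring ``in the reverse order of their bases.''
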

\begin{proof}
  Since the first-rank cylinders occur in the reverse order of their bases, we have
  \[\min\Delta^{A_s}_{e_i}<\min\Delta^{A_s}_{e_{i-1}}.\]
   The condition $e_{i}-e_{i-1}>d_1-d_0$ is equivalent to $e_i+d_0>e_{i-1}+d_1$ and hence 
   $\frac{1}{e_i+d_0}<\frac{1}{e_{i-1}+d_1}.$
      
      Therefore, the interval $(\frac{1}{e_i+d_0},\frac{1}{e_{i-1}+d_1})$
   lies strictly between the two adjacent first-rank cylinders
$\Delta^{A_s}_{e_i}$ and $\Delta^{A_s}_{e_{i-1}}$. Since
$E=\bigcup\limits_{k=0}^{s-1}\Delta^{A_s}_{e_k}$ we obtain $E\cap(\frac{1}{e_i+d_0};\frac{1}{e_{i-1}+d_1})=\emptyset$.
Moreover,
$\frac{1}{e_i+d_0}
=\max\Delta^{A_s}_{e_i}\in E$,
$\frac{1}{e_{i-1}+d_1}
=\min\Delta^{A_s}_{e_{i-1}}\in E.$
Thus, the interval is a gap of $E$.
\end{proof}
\begin{lemma}\label{lem6}
If, for all
 $i\in \{1,2,...,s-1\}$ the relation
  \begin{equation}\label{eq4}
    e_{i}-e_{i-1}\bigtriangledown d_1-d_0,
  \end{equation} holds, where $\bigtriangledown$ denotes one of the relations $\{=,<,>\}$, then $e_0e_{s-1} \bigtriangledown \frac{(s-1)^2}{s}.$
\end{lemma}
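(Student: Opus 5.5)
Summing the relations \eqref{eq4} over $i=1,\dots,s-1$ telescopes the left-hand sides: $e_{s-1}-e_0\bigtriangledown (s-1)(d_1-d_0)$. This is legitimate because all $s-1$ relations are the same relation. For $=$ the equalities simply add, and for a strict inequality each term is strict, so the sum is strict. The task is then to rewrite this single relation in terms of $p=e_0e_{s-1}$.

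From \eqref{eq:1} and \eqref{eq:2}, put $D=\sqrt{p(p+4)}-p>0$. Then
\[
d_1-d_0=\frac{D}{2}\Bigl(\frac1{e_0}-\frac1{e_{s-1}}\Bigr)=\frac{D\,(e_{s-1}-e_0)}{2p}.
\]
Substituting, the relation becomes
\[
e_{s-1}-e_0\ \bigtriangledown\ (s-1)\frac{D(e_{s-1}-e_0)}{2p}.
\]
Since $e_{s-1}-e_0>0$, we may divide by it without changing $\bigtriangledown$, which gives $2p\bigtriangledown (s-1)D$. Written out, this is
\[
(s+1)p\ \bigtriangledown\ (s-1)\sqrt{p(p+4)}.
\]

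Both sides are positive, so squaring preserves $\bigtriangledown$:
\[
(s+1)^2p^2\ \bigtriangledown\ (s-1)^2(p^2+4p).
\]
Dividing by $p>0$ and collecting terms gives $p\bigl[(s+1)^2-(s-1)^2\bigr]\bigtriangledown 4(s-1)^2$. Since $(s+1)^2-(s-1)^2=4s$, this is $4sp\bigtriangledown 4(s-1)^2$, that is, $e_0e_{s-1}\bigtriangledown \frac{(s-1)^2}{s}$.

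The computation is routine. The only point needing care is that $\bigtriangledown$ keeps its direction at every step. We never multiply by a negative quantity: the factors are $e_{s-1}-e_0>0$, $2p>0$ and $p>0$. Squaring is monotone on positive numbers, so it is also safe. Every step is in fact an equivalence, so the converse implication holds for the summed relation as well. The statement, however, only needs the forward direction.
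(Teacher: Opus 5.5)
Your proposal is correct and is essentially the paper's proof: sum the relations, rewrite $d_1-d_0$ in terms of $e_0,e_{s-1}$, reduce to $(s+1)p\bigtriangledown(s-1)\sqrt{p(p+4)}$, and square. The paper routes the middle step through $d_0=\frac{e_0}{e_{s-1}}d_1$ and $e_{s-1}\bigtriangledown(s-1)d_1$, which is the same computation; your version also keeps the factor $p$ on the left that the paper's displayed step drops, and it states why dividing by positive quantities and squaring positive sides preserve $\bigtriangledown$.
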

\begin{proof}
  Summing the relations~\eqref{eq4}, we obtain $e_{s-1}-e_0 \bigtriangledown (d_1-d_0)(s-1). $ Since $d_0=\frac{{e_0}{d_1}}{e_{s-1}},$ we have $$e_{s-1}-e_0 \bigtriangledown (1-\frac{e_0}{e_{s-1}})(s-1)d_1.$$ Hence
  $e_{s-1}\bigtriangledown (s-1)d_1.$ Putting ${e_0}{e_{s-1}}\equiv p$ and using the expression for $d_1$, we obtain
  \[2p\bigtriangledown(s-1)[\sqrt{p(p+4)}-p].\]
  Therefore, $(s+1)\bigtriangledown (s-1)\sqrt{p(p+4)}$ and $e_0e_{s-1}=p\bigtriangledown \frac{(s-1)^2}{s}$.
\end{proof}
\begin{corollary}
  If $s=2$, then the condition $e_0e_1\bigtriangledown\frac{1}{2}$ is equivalent to $e_1-e_0\bigtriangledown d_1-d_0$.
\end{corollary}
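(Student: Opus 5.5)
For $s=2$ the statement follows from Lemma~\ref{lem6}, but that lemma gives only one direction, so both directions need an argument. The plan is to repeat its chain of transformations and check that each step is an equivalence when $s=2$.

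With $s=2$ there is a single index $i=1$, so the hypothesis is just $e_1-e_0\bigtriangledown d_1-d_0$; there is nothing to sum. By the first corollary, $d_0=\frac{e_0}{e_1}d_1$, hence
\[
d_1-d_0=\Bigl(1-\frac{e_0}{e_1}\Bigr)d_1=\frac{e_1-e_0}{e_1}\,d_1 .
\]
Because $e_1-e_0>0$ and $e_1>0$, dividing by the positive number $\frac{e_1-e_0}{e_1}$ preserves each of the relations $=,<,>$. Therefore $e_1-e_0\bigtriangledown d_1-d_0$ is equivalent to $e_1\bigtriangledown d_1$.

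Next I would invoke the earlier lemma that compares $d_0,d_1$ with $e_0,e_{s-1}$. For $s=2$ it says:
\begin{itemize}
\item $e_0e_1=\tfrac12$ gives $d_1=e_1$;
\item $e_0e_1<\tfrac12$ gives $d_1>e_1$;
\item $e_0e_1>\tfrac12$ gives $d_1<e_1$.
\end{itemize}
These three cases on $p=e_0e_1$ are exhaustive and mutually exclusive, and so are the three outcomes. So the map from $p\bigtriangledown\tfrac12$ to the relation between $d_1$ and $e_1$ is a bijection of the trichotomy. Consequently $e_1\bigtriangledown d_1$ holds if and only if $p\bigtriangledown\tfrac12$, and this is consistent with Lemma~\ref{lem6} at $s=2$, where $\frac{(s-1)^2}{s}=\frac12$.

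One could instead redo the algebra directly: $2p\bigtriangledown\sqrt{p(p+4)}-p$ becomes $3p\bigtriangledown\sqrt{p(p+4)}$, then $9p^2\bigtriangledown p^2+4p$, then $p\bigtriangledown\frac12$. Every step here is an equivalence, since both sides are positive and $p>0$.

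The only real obstacle is keeping the orientation of $\bigtriangledown$ straight. The earlier lemma states the comparison as $d_1$ versus $e_1$, while the target compares $e_1-e_0$ with $d_1-d_0$, which amounts to $e_1$ versus $d_1$. One must check that $p<\tfrac12$ yields $d_1>e_1$, hence $e_1-e_0<d_1-d_0$, so that the relation symbols match in the same direction. The direct algebraic route confirms this orientation.
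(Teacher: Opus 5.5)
Your proof is correct and takes essentially the paper's route: the corollary is Lemma~\ref{lem6} specialized to $s=2$, where $\frac{(s-1)^2}{s}=\frac12$. There the reduction $e_1-e_0\bigtriangledown d_1-d_0 \iff e_1\bigtriangledown d_1$ and the subsequent algebra in $p$ are all reversible steps, so the converse you worry about also follows directly from Lemma~\ref{lem6} by the same trichotomy argument you apply to the earlier comparison lemma.
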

\begin{remark}
  It is clear that, for $s>2$ the equality $e_0e_{s-1} = \frac{(s-1)^2}{s}$ does not imply that $e_i-e_{i-1}=d_1-d_0$, $i=\overline{1,s-1}$.
\end{remark}
\begin{theorem}
   If $e_{i}-e_{i-1}>d_1-d_0$ for all $i\in \{1,2,...,s-1\}$, then the set $E$ is nowhere dense and has Lebesgue measure zero.
\end{theorem}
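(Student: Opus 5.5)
The plan is a self-similar gap argument. By Lemma~\ref{lemA2}, every pair of adjacent first-rank cylinders is separated by a gap of $E$. I transfer this to every rank using the Möbius map
\[
f_{c_1\ldots c_m}(t)=[0;c_1,\ldots,c_m,1/t]=\frac{p_m+tp_{m-1}}{q_m+tq_{m-1}} .
\]
This map is monotone on $[d_0,d_1]$ and satisfies
\[
\Delta^{A_s}_{c_1\ldots c_mc}=f_{c_1\ldots c_m}\bigl(\Delta^{A_s}_{c}\bigr), \qquad \overline{\Delta}^{A_s}_{c_1\ldots c_m}=f_{c_1\ldots c_m}([d_0,d_1]).
\]
So inside each cylindrical interval $\overline{\Delta}^{A_s}_{c_1\ldots c_m}$, the $s$ subintervals $\overline{\Delta}^{A_s}_{c_1\ldots c_mc}$ are pairwise disjoint, being images of disjoint first-rank intervals. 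Their total length is then strictly less than $|\overline{\Delta}^{A_s}_{c_1\ldots c_m}|$.

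\textbf{Measure zero.} I want a uniform constant $\theta<1$ such that
\[
\sum_{c\in A_s}|\overline{\Delta}^{A_s}_{c_1\ldots c_mc}|\le\theta\,|\overline{\Delta}^{A_s}_{c_1\ldots c_m}| .
\]
The total gap length inside $\overline{\Delta}^{A_s}_{c_1\ldots c_m}$ is the image under $f_{c_1\ldots c_m}$ of the first-rank gaps $G_i=(\frac1{e_i+d_0},\frac1{e_{i-1}+d_1})$. The derivative of $f$ is $|f'(t)|=(q_m+tq_{m-1})^{-2}$. Its ratio to the average slope,
\[
\frac{|\overline{\Delta}^{A_s}_{c_1\ldots c_m}|}{d_1-d_0}=\frac{1}{(q_m+d_0q_{m-1})(q_m+d_1q_{m-1})},
\]
is at least $\bigl(\frac{1+d_0x}{1+d_1x}\bigr)^2$ with $x=q_{m-1}/q_m$. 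Using $x\in[\frac{e_0}{p+1},\frac1{e_0}]$ from the proof of Theorem~\ref{th2}, this is bounded below by a positive constant $\kappa$. Hence the total gap length is at least $\kappa\,\frac{\sum_i|G_i|}{d_1-d_0}\,|\overline{\Delta}^{A_s}_{c_1\ldots c_m}|$, and I can take
\[
\theta=1-\kappa\,\frac{\sum_i|G_i|}{d_1-d_0}<1 .
\]
Iterating, the sets $E_m$ from the proof of Theorem~\ref{th1} satisfy $\lambda(E_m)\le\theta^m(d_1-d_0)\to0$. Since $E=\bigcap_m E_m$, we get $\lambda(E)=0$.

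\textbf{Nowhere density.} $E$ is closed by Theorem~\ref{th1}, so it suffices that $E$ contains no interval. Suppose $E\supset(\alpha,\beta)$ and pick a point $x_0=\Delta^{A_s}_{c_1c_2\ldots}$ in it. By the Corollary following Theorem~\ref{th2}, $|\overline{\Delta}^{A_s}_{c_1\ldots c_m}|\to0$, so for large $m$ we have $\overline{\Delta}^{A_s}_{c_1\ldots c_m}\subset(\alpha,\beta)$. But this cylindrical interval contains the nonempty open gap $f_{c_1\ldots c_m}(G_1)$, which is disjoint from $E$. That is a contradiction. (Nowhere density also follows directly from measure zero plus closedness.)

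\textbf{Main obstacle.} I expect the hardest step to be justifying that the images $f_{c_1\ldots c_m}(G_i)$ really are disjoint from all of $E$, not merely from $\Delta^{A_s}_{c_1\ldots c_m}$. Equivalently, the cylindrical intervals of the same rank must be pairwise disjoint. I would prove this by induction on $m$. At rank $1$ it is Lemma~\ref{lemA2} together with the reverse ordering of first-rank cylinders. At rank $m+1$, the intervals inside a common parent are disjoint by the monotonicity of $f$, and intervals inside different parents are disjoint because their parents are. With that in hand, the gap-length bound is a routine check of the derivative distortion, which I have already organized above.
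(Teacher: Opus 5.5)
Your proof is correct. It shares the paper's skeleton: gaps at every rank give nowhere density, and removing a uniform proportion of each cylindrical interval at each step gives $\lambda(E_m)\to 0$. The difference is in how you obtain the crucial uniform contraction.

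The paper writes $\lambda(E)=(d_1-d_0)\prod_m \lambda(E_m)/\lambda(E_{m-1})$ and tries to get $\lambda(E_m)/\lambda(E_{m-1})<C_2<1$ from Theorem~\ref{th2}. That theorem only bounds the ratio of a \emph{single} child cylinder to its parent. It says nothing about the sum over the $s$ children: in the zero-redundancy case every child ratio is below $C_2$, yet the children fill the parent exactly. Moreover, the inequality the paper actually derives, $\lambda(\overline{E}_m)/\lambda(E_{m-1})\le 1-C_1$, is an upper bound on the removed proportion. Divergence of the series needs a lower bound, which would have to come from the unjustified $\lambda(E_m)/\lambda(E_{m-1})<C_2$.

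Your bounded-distortion estimate for the Möbius maps $f_{c_1\ldots c_m}$, applied to the first-rank gaps $G_i$, supplies exactly that missing lower bound, $1-\theta=\kappa\sum_i|G_i|/(d_1-d_0)$. This is also the point where the hypothesis $e_i-e_{i-1}>d_1-d_0$ is genuinely used. Likewise, your induction via $f_{c_1\ldots c_m}$ is the rigorous form of the paper's ``similarly for all ranks'' claim about gaps.

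Some simplifications are available:
\begin{itemize}
\item For the measure bound you only need subadditivity, $\lambda(E_m)\le\sum_{c_1\ldots c_m}|\overline{\Delta}^{A_s}_{c_1\ldots c_m}|\le\theta^m(d_1-d_0)$. So no disjointness across different parents is required there.
\item Since $E$ is closed, measure zero already forces empty interior. The disjointness induction you flag as the main obstacle is therefore correct but optional.
\item The exact distortion ratio is $\frac{1+d_0x}{1+d_1x}$, so your squared version is a weaker but still valid bound.
\item Since $\frac{1+d_0x}{1+d_1x}>\frac{d_0}{d_1}$ for all $x\ge 0$, you can take $\kappa=d_0/d_1$ without invoking the range of $q_{m-1}/q_m$.
\end{itemize}
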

\begin{proof} By Lemma~\ref{lem6}, the assumptions of the theorem imply that the first-rank cylindrical intervals are pairwise disjoint, and the a complementary intervals between them contain no points of $E$. Similarly, if $\Delta^{A_s}_{e_i}$ is an arbitrary first-rank cylindrical interval, then the second-rank cylindrical intervals contained in it are pairwise disjoint, and the complementary intervals between them contain no points of $E$. The same holds for cylindrical intervals of all higher ranks. Therefore
\[E=\bigcap\limits_{m=1}^{\infty}\left[\bigcup\limits_{c_1=0}^{s-1}...\bigcup\limits_{c_m=0}^{s-1}\Delta^{A_s}_{c_1...c_m}\right],\]
 is perfect and nowhere dense.

It remains to prove that $E$ has Lebesgue measure zero.

Let $E_0=[d_0;d_1]$, $E_1=\bigcup\limits_{c_1=0}^{s-1}
\overline{\Delta}^{A_s}_{c_1}$,
$E_2=\bigcup\limits_{c_1=0}^{s-1}
\bigcup\limits_{c_2=0}^{s-1}
\overline{\Delta}^{A_s}_{c_1c_2},\ldots,
E_m=\bigcup\limits_{c_1=0}^{s-1}\cdots
\bigcup\limits_{c_m=0}^{s-1}\overline{\Delta}^{A_s}_{c_1\ldots c_m},\ldots$
Then
$E\subset E_m\subset E_{m-1}\subset\ldots\subset E_2\subset E_1\subset E_0$ and
\[E=\bigcap\limits_{m=1}^{\infty}E_m=
\lim\limits_{m\to\infty}E_m.\]
Hence, under the assumptions of the theorem, $\lambda(E)\leq \lambda(E_m)$ $\forall m\in N$ and
\[\lambda(E)=\lim\limits_{m\to\infty}\lambda(E_m).\]
By the properties of Lebesgue measure, we have
\[\lambda(E_m)=(d_1-d_0)
\frac{\lambda(E_m)}{\lambda(E_{m-1})}\cdot
\frac{\lambda(E_{m-1})}{\lambda(E_{m-2})}\cdot\cdots\cdot
\frac{\lambda(E_2)}{\lambda(E_{1})}\cdot
\frac{\lambda(E_1)}{\lambda(E_{0})},\]
and therefore
\[\lambda(E)=\lim\limits_{m\to\infty}\lambda(E_m)=
(d_1-d_0)\prod\limits_{m=1}^{\infty}\frac{\lambda(E_m)}{\lambda(E_{m-1})}.\]

Taking into account the structure of the sets $E_{m-1}$, namely $E_{m-1}=E_{m}\cup \overline{E}_{m}$, we have
$E_m=E_{m-1}\setminus \overline{E}_m$.
Consequently,
\[\lambda(E)=(d_1-d_0)\prod\limits_{m=1}^{\infty}
\frac{\lambda(E_{m-1})-\lambda(\overline{E}_m)}{\lambda(E_{m-1})}=
(d_1-d_0)\prod\limits_{m=1}^{\infty}
\left[1-\frac{\lambda(\overline{E}_m)}
{\lambda(E_{m-1})}\right].\]
Since $\overline{E}_m\subset E_{m-1}$, we have
$\frac{\lambda(\overline{E}_m)}
{\lambda(E_{m-1})}<1$. On the other hand, taking into account that the main metric ratio is bounded away from both $0$ and $1$, i.e., $0<C_1<\frac{\lambda({E}_m)}
{\lambda(E_{m-1})}<C_2<1$, we obtain
\[\frac{\lambda(E_m)}{\lambda(E_{m-1})}+\frac{\lambda(\overline{E}_m)}
{\lambda(E_{m-1})}=1,\]
and hence
\[\frac{\lambda(\overline{E}_m)}{\lambda(E_{m-1})}=1-
\frac{\lambda({E}_m)}
{\lambda(E_{m-1})}\leq1-C_1<1.\]
Thus, $0<\frac{\lambda(\overline{E}_m)}{\lambda(E_{m-1})}<1$
and $\sum\limits_{m=1}^{\infty}
\frac{\lambda(\overline{E}_m)}{\lambda(E_{m-1})}=\infty$.
Taking into account the relation between divergent infinite products and the corresponding series~\cite{Prats_monog}, we conclude that
$\prod\limits_{m=1}^{\infty}
\left[1-\frac{\lambda(\overline{E}_m)}
{\lambda(E_{m-1})}\right]$ converges to zero. Therefore,
$\lambda(E)=0$.
\end{proof}
\begin{corollary}\label{cor:1}
  If $s=2$ and $e_1-e_0>d_1-d_0$ (which is equivalent to $e_0e_{1}>\frac{1}{2}$), then $E$ є is a nowhere dense set of Lebesgue measure zero.
\end{corollary}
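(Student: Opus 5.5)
The corollary follows by specializing the preceding theorem to $s=2$. For $s=2$ there is a single index $i=1$, so the hypothesis ``$e_i-e_{i-1}>d_1-d_0$ for all $i\in\{1,\dots,s-1\}$'' reduces to the one inequality $e_1-e_0>d_1-d_0$. Once we know this condition holds, the theorem immediately gives that $E$ is nowhere dense and has Lebesgue measure zero. So the only content to verify is the parenthetical equivalence $e_1-e_0>d_1-d_0 \Leftrightarrow e_0e_1>\frac12$.

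For this equivalence I would first invoke the corollary following Lemma~\ref{lem6}. That corollary states that for $s=2$ the condition $e_0e_1\bigtriangledown\frac12$ is equivalent to $e_1-e_0\bigtriangledown d_1-d_0$. Note that $\frac{(s-1)^2}{s}=\frac12$ at $s=2$.

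I would also check the direction not supplied by Lemma~\ref{lem6}, since that lemma only proves ``relation on differences $\Rightarrow$ relation on $p$.'' For $s=2$ every step of its proof is reversible, so we can run the chain of equivalences directly:
\[
e_1-e_0>d_1-d_0 \iff e_1-e_0>\Bigl(1-\frac{e_0}{e_1}\Bigr)d_1 \iff e_1>d_1 .
\]
The first step uses $d_0=\frac{e_0}{e_1}d_1$ from \eqref{eq:3}. The second step divides by $1-\frac{e_0}{e_1}>0$, which is legitimate because $e_0<e_1$. With $p=e_0e_1$, the condition $e_1>d_1$ is in turn equivalent to
\[
2p>\sqrt{p(p+4)}-p \iff 9p^2>p^2+4p \iff p>\tfrac12 ,
\]
where squaring is allowed since both sides are positive. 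This matches item 3 of the lemma on $d_0,d_1$ versus $e_0,e_{s-1}$.

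The main point requiring care is exactly this reversibility: each manipulation must either divide by a positive quantity or square positive quantities. Once the equivalence is established, the theorem applies verbatim and yields the conclusion.
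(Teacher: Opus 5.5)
Your proposal is correct and takes the same route as the paper. The corollary is simply the preceding theorem specialized to $s=2$, and the parenthetical equivalence comes from the corollary to Lemma~\ref{lem6}, where $\frac{(s-1)^2}{s}=\frac12$. Your reversible computation $e_1-e_0>d_1-d_0\iff e_1>d_1\iff p>\frac12$ is sound but not needed: Lemma~\ref{lem6} proves the implication for each of the mutually exclusive relations $=,<,>$, so trichotomy gives the converse automatically.
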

\begin{lemma}
  If there exists an integer $j\in\{1,2,...,s-1\}$, such that $e_{i+j}e_{i}\leq\frac{1}{2}$, then, for a fixed finite sequence of alphabet elements $(c_1,c_2,\ldots, c_m)$ and $i\in \{0,...,s-j-1\}$, the set
  \[\Delta'_{c_1...c_m}=\{x:~x=\Delta^{A_s}_{c_1...c_m\alpha_1\alpha_2...}, \alpha_n\in \{e_i,e_{i+j}\}\forall n\in N\}\]
is an interval entirely contained in $E$.
\end{lemma}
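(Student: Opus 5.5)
The plan is to reduce the statement to the two-letter alphabet $A'_2=\{a,b\}$ with $a=e_i<b=e_{i+j}$ and $ab\le\frac12$. Write $d'_0=[0;(b,a)]$ and $d'_1=[0;(a,b)]$ for the endpoints given by Theorem~\ref{lem:porivn} applied to $A'_2$. Let $E'=E(A'_2)$, which is contained in $[d'_0;d'_1]$. First I will show $E'=[d'_0;d'_1]$. Then I will transport this to $\Delta'_{c_1\ldots c_m}$ using the map
\[
f(t)=[0;c_1,\ldots,c_m,t^{-1}]=\frac{p_m+tp_{m-1}}{q_m+tq_{m-1}}, \qquad t\in[d'_0;d'_1].
\]
Since $q_m+tq_{m-1}>0$, the map $f$ is continuous and strictly monotone. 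Its derivative has constant sign $\pm(q_mp_{m-1}-p_mq_{m-1})/(q_m+tq_{m-1})^2$. By definition $\Delta'_{c_1\ldots c_m}=f(E')$. So once $E'$ is an interval, $\Delta'_{c_1\ldots c_m}$ is a (closed) interval. It lies in $E$ because every sequence over $\{e_i,e_{i+j}\}$ lies in $L_s$.

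\textbf{Step 1: rank-one cylindrical intervals cover the base interval.} For the alphabet $A'_2$, the cylindrical intervals of rank one are $\bar\Delta'_b=[\frac{1}{b+d'_1};\frac{1}{b+d'_0}]$ and $\bar\Delta'_a=[\frac{1}{a+d'_1};\frac{1}{a+d'_0}]$. Their outer endpoints are $\frac1{b+d'_1}=d'_0$ and $\frac1{a+d'_0}=d'_1$, by the fixed-point equations of the periodic fractions. Hence they cover $[d'_0;d'_1]$ exactly when $\frac{1}{b+d'_0}\ge\frac{1}{a+d'_1}$, i.e. when $b-a\le d'_1-d'_0$. By the Corollary to Lemma~\ref{lem6} for $s=2$, this is equivalent to $ab\le\frac12$, which is our hypothesis. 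This comparison of the rank-one intervals is the only point where the hypothesis enters, and I expect it to be the main thing to verify carefully. In particular I must check that the inequality direction in that corollary, and the reversed order of cylinders, are handled correctly.

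\textbf{Step 2: propagate to all ranks.} For any base $c'_1\ldots c'_k$ over $A'_2$, the cylindrical interval $\bar\Delta'_{c'_1\ldots c'_k}$ equals $g([d'_0;d'_1])$, where $g$ is the analogous monotone continuous Möbius map for that base. Its two children are $g(\bar\Delta'_a)$ and $g(\bar\Delta'_b)$. By Step 1 and the continuity and monotonicity of $g$, these children cover their parent.

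\textbf{Step 3: every point of the interval is attained.} Take any $x\in[d'_0;d'_1]$. Using Step 2 inductively, choose letters $\alpha_1,\alpha_2,\ldots\in\{a,b\}$ so that $x\in\bar\Delta'_{\alpha_1\ldots\alpha_n}$ for all $n$. By the Corollary estimating $|\bar\Delta|<(d'_1-d'_0)/q_n^2\to0$ (applied to $A'_2$), these nested intervals shrink to a single point. By the Corollary on cylinders of infinite rank, that point is $[0;\alpha_1,\alpha_2,\ldots]\in E'$. Hence $x\in E'$, so $E'=[d'_0;d'_1]$, and applying $f$ finishes the proof.
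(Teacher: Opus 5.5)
Your proof is correct. It shares the paper's basic reduction to the two-letter alphabet $\{e_i,e_{i+j}\}$ together with the inclusion $E(A_2)\subset E$. It differs in how the two-letter fact is obtained and in how the prefix $c_1\ldots c_m$ is handled.

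The paper simply cites \cite{DKPr} for the fact that $E(A_2)$ is an interval when $e_ie_{i+j}\le\frac12$. It quotes the endpoints as $\frac{1}{2e_{i+j}},\frac{1}{2e_i}$, which is accurate only when $e_ie_{i+j}=\frac12$; in general they are your $d'_0,d'_1$. It then passes from $E(A_2)$ to $\Delta'_{c_1\ldots c_m}$ without comment.

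You instead derive the interval property from the paper's own results:
\begin{itemize}
\item The $s=2$ corollary of Lemma~\ref{lem6} gives the direction you need, $ab\le\frac12\Rightarrow b-a\le d'_1-d'_0$. Lemma~\ref{lem6} alone gives only the converse implication, so this direction requires trichotomy, which the corollary implicitly uses. One can also see it directly, since $b-a\le d'_1-d'_0\iff b\le d'_1$. Hence $[d'_0,\frac{1}{b+d'_0}]$ and $[\frac{1}{a+d'_1},d'_1]$ cover $[d'_0,d'_1]$.
\item The monotone M\"obius maps propagate this covering to all ranks.
\item Shrinking nested cylindrical intervals then show that every point is attained.
\item The explicit map $f$ makes the transport to the cylinder rigorous.
\end{itemize}

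Your route buys self-containedness and correct endpoints. Moreover, your Steps~2--3 supply exactly the propagation argument that the paper's proof of Theorem~\ref{mth} skips, where absence of first-rank gaps is silently taken to mean absence of all gaps. That argument works verbatim for any alphabet whose consecutive rank-one cylindrical intervals overlap or touch. The paper's route is shorter but rests entirely on the external reference.
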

\begin{proof}
  Suppose that, for some $j\in \{1,2,\cdots,s-1\}$ and a fixed $i\in \{0,...,s-j-1\}$ $$e_{i+j}e_{i}\leq\frac{1}{2}.$$ Consider the set $E(A_2)$ of all values of $A_2$-fractions~\cite{DKPr} with alphabet $A_2=\{e_{i},e_{i+j}\}$. It was proved in~\cite{DKPr} that if the elements of the alphabet satisfy $e_{i+j}e_{i}\leq\frac{1}{2}$, then the set $E(A_2)$ is the interval $[\frac{1}{2e_{i+j}},\frac{1}{2e_i}]$. Since
  $A_2\subset A_s$, every $A_2$-fraction is clearly an $A_s$-fraction. Hence,
  $E(A_2)\subset E$, which implies that the set $\Delta'_{c_1...c_m}$ is an interval entirely contained in $E$.
\end{proof}
  \section{The $A_s$-continued fraction representation of real numbers}
We call the representation of a number $x\in E$
by an $A_s$-continued fraction its $A_s$-representation, while the symbolic notation $x=[0;a_1,a_2,...,a_n,...]$ is called the $A_s$-expansion of the number $x$.
  
\begin{theorem}\label{mth}
    The set $E$  of all values of infinite $A_s$-continued fractions is the
interval $[d_0,d_1]$, if and only if
    $e_i-e_{i-1}\leq d_1-d_0$  $\forall i\in \{1,2,...,s-1\}$ (in this case, $e_0e_{s-1}\leq \frac{(s-1)^2}{s}$).
  \end{theorem}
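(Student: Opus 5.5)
Necessity is immediate from Lemma~\ref{lemA2}. If $E=[d_0,d_1]$ but $e_i-e_{i-1}>d_1-d_0$ for some $i$, then $(\frac{1}{e_i+d_0},\frac{1}{e_{i-1}+d_1})$ is a nonempty open gap of $E$ inside $[d_0,d_1]$, which is a contradiction. The parenthetical claim $e_0e_{s-1}\le\frac{(s-1)^2}{s}$ is obtained as in Lemma~\ref{lem6}. Summing $e_i-e_{i-1}\le d_1-d_0$ gives $e_{s-1}-e_0\le (s-1)(d_1-d_0)$, and the same algebra as there turns this into $p\le \frac{(s-1)^2}{s}$.

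For sufficiency I use a nested-interval argument: fix $x\in[d_0,d_1]$ and construct digits $c_1,c_2,\ldots$ with $x\in\overline{\Delta}^{A_s}_{c_1\ldots c_m}$ for all $m$. The key claim is that, for every base $c_1\ldots c_m$ (including the empty one), the $s$ cylindrical intervals $\overline{\Delta}^{A_s}_{c_1\ldots c_m e_k}$, $k=0,\ldots,s-1$, cover $\overline{\Delta}^{A_s}_{c_1\ldots c_m}$.

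For $m=0$, the interval $\overline{\Delta}^{A_s}_{e_k}$ is $[\frac{1}{e_k+d_1},\frac{1}{e_k+d_0}]$. The extreme ones have endpoints $\frac{1}{e_{s-1}+d_0}=d_0$ and $\frac{1}{e_0+d_1}=d_1$, by the periodic representations $d_0=[0;(e_{s-1},e_0)]$ and $d_1=[0;(e_0,e_{s-1})]$. Consecutive intervals overlap or touch exactly when $\frac{1}{e_i+d_0}\ge\frac{1}{e_{i-1}+d_1}$, i.e.\ when $e_i-e_{i-1}\le d_1-d_0$. Hence $\bigcup_k\overline{\Delta}^{A_s}_{e_k}=[d_0,d_1]$.

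For general $m$, I transport this covering through the map $f(t)=[0;c_1,\ldots,c_m,t^{-1}]=\frac{p_m+tp_{m-1}}{q_m+tq_{m-1}}$. This is a continuous strictly monotone Möbius map of $[d_0,d_1]$, with orientation depending on the parity of $m$. By the paper's description of the cylinder endpoints, $f$ maps $[d_0,d_1]$ onto $\overline{\Delta}^{A_s}_{c_1\ldots c_m}$. The step needing care is that $f$ also maps $\overline{\Delta}^{A_s}_{e_k}$ onto $\overline{\Delta}^{A_s}_{c_1\ldots c_m e_k}$. This holds because $f$ is monotone and sends the endpoints $[0;e_k,(e_{s-1},e_0)]$ and $[0;e_k,(e_0,e_{s-1})]$ to the endpoints of the rank-$(m+1)$ cylindrical interval. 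A monotone continuous image of a covering of $[d_0,d_1]$ is therefore a covering of the image interval, which proves the claim.

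With the claim in hand, I choose $c_1$ with $x\in\overline{\Delta}^{A_s}_{c_1}$, then $c_2$ with $x\in\overline{\Delta}^{A_s}_{c_1c_2}$, and so on. The lengths of these intervals tend to $0$ by the corollary to the length lemma. By the corollary identifying $\bigcap_n\overline{\Delta}^{A_s}_{c_1\ldots c_n}$ with the point $\Delta^{A_s}_{c_1c_2\ldots}\in E$, we get $x=[0;c_1,c_2,\ldots]\in E$. Together with $E\subseteq[d_0,d_1]$ from Theorem~\ref{lem:porivn}, this gives $E=[d_0,d_1]$. The only genuine obstacle is the transport step: checking that the cylindrical intervals of rank $m+1$ are exactly the images under $f$, and that the overlap condition is the same at every rank. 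Both follow from the explicit Möbius form and do not require the metric estimates of Theorem~\ref{th2}.
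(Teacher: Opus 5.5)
Your proof is correct. For necessity and for the bound $e_0e_{s-1}\le\frac{(s-1)^2}{s}$ it coincides with the paper's: Lemma~\ref{lemA2}, then the summation algebra of Lemma~\ref{lem6}.

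For sufficiency your route differs. The paper relies on the closedness of $E$ (Theorem~\ref{th1}): $E=[d_0,d_1]$ iff $E$ has no complementary intervals. It then asserts, citing Lemma~\ref{lemA2}, that this happens iff the first-rank cylindrical intervals overlap or touch. But Lemma~\ref{lemA2} only produces a gap when the condition fails. The paper therefore leaves implicit the passage from ``no gap between first-rank intervals'' to ``no gap at any rank''.

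Your argument supplies exactly that step. You transport the rank-one covering $\bigcup_k\overline{\Delta}^{A_s}_{e_k}=[d_0,d_1]$ through the monotone M\"obius map $f(t)=\frac{p_m+tp_{m-1}}{q_m+tq_{m-1}}$, which shows that the covering property holds inside every cylindrical interval. The nested choice of digits then gives an explicit expansion of each $x\in[d_0,d_1]$, which also justifies the remark following the theorem. It uses only the length estimate, not closedness or the metric bounds of Theorem~\ref{th2}. The paper's version is shorter; yours is the one that actually proves the ``if'' direction.

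One slip to fix concerns the extreme endpoints. They are $\frac{1}{e_{s-1}+d_1}=d_0$ and $\frac{1}{e_0+d_0}=d_1$, not $\frac{1}{e_{s-1}+d_0}$ and $\frac{1}{e_0+d_1}$ as you wrote. The reason is that the tail of $[0;(e_{s-1},e_0)]$ after its first digit is $[0;(e_0,e_{s-1})]=d_1$, and vice versa. The correct values are the left endpoint of $\overline{\Delta}^{A_s}_{e_{s-1}}$ and the right endpoint of $\overline{\Delta}^{A_s}_{e_0}$, so your conclusion $\bigcup_k\overline{\Delta}^{A_s}_{e_k}=[d_0,d_1]$ is unaffected.
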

  \begin{proof}
    Since $E$ is closed (Theorem~\ref{th1}), the equality $E = [d_0;d_1]$ holds if and only if $E$ has no complementary
intervals. By Lemma~\ref{lemA2}, this is possible if and only if $\max{\Delta_{e_i}}\geq\min{\Delta_{e_{i-1}}},$ $i\in \{1,2,...,s-1\}$.
Equivalently,
    \[\frac{1}{e_i+d_0}\geq\frac{1}{e_{i-1}+d_1},\]
 which is equivalent to   \[
    e_{i-1}+d_1\geq e_{i}+d_0, \]
    and hence 
    \[e_{i}-e_{i-1}\leq d_1-d_0.\]
  Summing these inequalities for
  $i=1,\ldots,s-1$
\begin{align*}
 &e_1-e_0\leq d_1-d_0, \\
 &e_2-e_1\leq d_1-d_0, \\
 &\cdots\cdots\cdots\cdots\cdots, \\
 &e_{s-1}-e_{s-2}\leq d_1-d_0,
\end{align*}
we obtain
 $e_{s-1}-e_0\leq (s-1)(d_1-d_0).$ Since $\frac{d_0}{d_1}=\frac{e_0}{e_{s-1}}$, we have $d_0=\frac{e_0}{e_{s-1}}d_1$. Therefore,
 \[e_{s-1}-e_0\leq (s-1)d_1(1-\frac{e_0}{e_{s-1}}),\]
 \[1\leq \frac{(s-1)d_1}{e_{s-1}}=(s-1)\frac{d_0}{d_1}\Leftrightarrow
 e_{s-1}\leq (s-1)\cdot\frac{\sqrt{p(p+4)}-p}{2e_0}.\]
 Hence
 \[2p\leq (s-1)[\sqrt{p(p+4)}-p],\]
 \[2p\leq (s-1)\sqrt{p(p+4)}-p(s-1).\]
 Thus,
 $p\leq \frac{(s-1)^2}{s}$, which is equivalent to  $e_0e_{s-1}\leq\frac{(s-1)^2}{s}$.
   \end{proof}
Under the assumptions of the theorem, every number in the interval $[d_0,d_1]$ has an $A_s$-expansion.

\section{Zero-Redundancy Number Representation Systems}
By the coding of the elements of a set $X$ by means of an alphabet $A$, we mean a surjective mapping
$
X\to A\times A\times\cdots.$
A coding (representation) system is said to have zero redundancy if each element has at most two representations, and the set of elements having two representations is at most countable.

  \begin{theorem}\label{repreth}
    The set of values of all $A_s$-continued fractions is an interval, and
the coding system of numbers by means of $A_s$-continued fractions has
zero redundancy if and only if the numbers $e_0,e_1,...,e_{s-1}$ are consecutive terms of an arithmetic
progression with common difference $d\equiv d_1-d_0$ (в цьому випадку  $e_0e_{s-1}=\frac{(s-1)^2}{s}$).
  \end{theorem}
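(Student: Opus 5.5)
The argument is organized around the geometry of first-rank cylinders. By Theorem~\ref{mth}, $E=[d_0,d_1]$ holds exactly when $e_i-e_{i-1}\le d_1-d_0$ for all $i$. So, given that $E$ is an interval, we must show that zero redundancy is equivalent to equality in all these inequalities. Recall that the cylinders $\Delta^{A_s}_{e_i}$ lie in reverse order of their bases, with
\[
\Delta^{A_s}_{e_i}=\left[\frac{1}{e_i+d_1},\frac{1}{e_i+d_0}\right].
\]
Adjacent cylinders $\Delta^{A_s}_{e_i}$ and $\Delta^{A_s}_{e_{i-1}}$ therefore overlap in an interval of positive length exactly when $\frac{1}{e_i+d_0}>\frac{1}{e_{i-1}+d_1}$, that is, when $e_i-e_{i-1}<d_1-d_0$. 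They touch at a single point exactly when $e_i-e_{i-1}=d_1-d_0$.

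\emph{Necessity.} Assume $E$ is an interval and the coding has zero redundancy. If for some $i$ we had $e_i-e_{i-1}<d_1-d_0$, then $I=\Delta^{A_s}_{e_i}\cap\Delta^{A_s}_{e_{i-1}}$ would be a nondegenerate interval. Every $x\in I$ has one expansion beginning with $e_i$ and another beginning with $e_{i-1}$, so the set of points with at least two representations contains an uncountable set. This contradicts zero redundancy. Hence equality holds for all $i$, so the $e_i$ form an arithmetic progression with difference $d=d_1-d_0$. The relation $e_0e_{s-1}=\frac{(s-1)^2}{s}$ then follows from Lemma~\ref{lem6} with $\bigtriangledown$ taken as $=$.

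\emph{Sufficiency.} Assume $e_i=e_0+id$. By Theorem~\ref{mth}, $E=[d_0,d_1]$. The key step is to show that for every base $c_1\ldots c_m$, the cylindrical intervals $\overline\Delta^{A_s}_{c_1\ldots c_mc}$ for $c\in A_s$ tile $\overline\Delta^{A_s}_{c_1\ldots c_m}$, meeting only at endpoints. Since $x\mapsto[0;c_1,\ldots,c_m,x^{-1}]$ is a strictly monotone continuous (Möbius) map, it preserves the order, disjointness and touching pattern of the first-rank cylinders, which touch exactly at single points under the equality hypothesis. So the claim reduces to rank one, which is already established.

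It remains to count representations of a fixed $x$. If $x$ lies in the interior of a cylinder of rank $m$ at every level, its digits are forced, so it has a unique expansion. Otherwise $x$ is a common endpoint of two adjacent cylinders of some minimal rank $m$. Such a point equals
\[
[0;c_1,\ldots,c_{m-1},e_i,(e_0,e_{s-1})]=[0;c_1,\ldots,c_{m-1},e_{i-1},(e_{s-1},e_0)],
\]
where the eventual tails are forced because an endpoint of a cylinder is attained only by the extremal tail. This follows from the strictness in Theorem~\ref{lem:porivn}, which I would verify by checking that the inequality $\rho\ge0$ is strict unless the digits are extremal, and then passing to the limit with care. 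Thus $x$ has exactly two representations, and there are countably many such points, since they are indexed by finite words. This gives zero redundancy.

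\textbf{Main obstacle.} The delicate point is the uniqueness of the extremal tails: we need that $[0;a_1,a_2,\ldots]=d_1$ forces $(a_n)=(e_0,e_{s-1},e_0,\ldots)$. The finite inequalities in~\eqref{eq0n} are strict for non-extremal digits, but strictness can be lost in the limit. To handle this, I would use the contraction from Theorem~\ref{th2}. If $a_k$ is the first non-extremal digit, write $x=[0;a_1,\ldots,a_{k-1},a_k+t]$ with $t\in[d_0,d_1]$. Then $a_k+t$ differs from $e_{\mathrm{extremal}}+t'$ for every admissible $t'$, except possibly when cylinders touch. This is excluded by the monotone-map structure already proved: distinct first-rank cylinders have disjoint interiors and share only endpoints corresponding to the extremal tails.
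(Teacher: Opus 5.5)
Your proof is essentially correct and follows the paper's route: both directions reduce to adjacent first-rank cylinders meeting in exactly one point, i.e.\ $e_i+d_0=e_{i-1}+d_1$ for every $i$. The paper only asserts, citing Theorem~\ref{mth} and Lemma~\ref{lem6}, that \emph{$E$ is an interval and the coding has zero redundancy} is equivalent to this touching condition, and then re-derives $e_0e_{s-1}=\frac{(s-1)^2}{s}$ by summing and solving. You actually prove the equivalence:
\begin{itemize}
\item \emph{Necessity.} Once $E=[d_0,d_1]$, each $\Delta^{A_s}_{e_i}$ is the full interval $[\frac{1}{e_i+d_1},\frac{1}{e_i+d_0}]$. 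A strict inequality therefore gives a nondegenerate overlap, i.e.\ uncountably many doubly represented points.
\item \emph{Sufficiency.} You transport the rank-one tiling to all ranks through the monotone maps $x\mapsto[0;c_1,\ldots,c_m,x^{-1}]$.
\end{itemize}
You also get the product identity directly from Lemma~\ref{lem6}. On the sufficiency side your argument supplies what the paper leaves out.

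Two corrections.

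\emph{Swapped tails.} Your displayed identity has the tails the wrong way round. Since $\max\Delta^{A_s}_{e_i}=\frac{1}{e_i+d_0}$ and $\min\Delta^{A_s}_{e_{i-1}}=\frac{1}{e_{i-1}+d_1}$, the common endpoint is $[0;c_1,\ldots,c_{m-1},e_i,(e_{s-1},e_0)]=[0;c_1,\ldots,c_{m-1},e_{i-1},(e_0,e_{s-1})]$. As you wrote it, injectivity of the prefix map would force $e_i+d_1=e_{i-1}+d_0$, which is impossible.

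\emph{The main obstacle is not one.} Neither limits nor Theorem~\ref{th2} are needed. We have $d_1=\frac{1}{e_0+d_0}$, $a_1\ge e_0$ and $[0;a_2,a_3,\ldots]\ge d_0$. So $[0;a_1,a_2,\ldots]=d_1$ forces $a_1=e_0$ and $[0;a_2,\ldots]=d_0$. Symmetrically, $d_0=\frac{1}{e_{s-1}+d_1}$ forces $a_1=e_{s-1}$ and tail value $d_1$. Induction then gives the extremal tail.

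In fact you do not need the exact form of the doubly represented points at all. By the tiling, each point lies in at most two rank-$m$ cylinders for every $m$, hence on at most two infinite branches. A point with two branches is a common endpoint of the two cylinders at the rank where the branches split, so there are only countably many such points. This also handles $x=d_0$ and $x=d_1$, which your dichotomy (interior at every rank versus common endpoint at some rank) silently omits: they are endpoints but not common endpoints.
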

\begin{proof}
By Theorem~\ref{mth} and Lemma~\ref{lem6}, the equality $E=[d_0;d_1]$, together with zero redundancy of the coding system is
equivalent to the condition that the first-rank cylindrical intervals
have pairwise disjoint interiors and have common endpoints. 

This is possible if and only if, for every $i\in \{1,2,...,s-1\}$ 
\[\max{\Delta^{A_s}_{e_i}}=\min{\Delta^{A_s}_{e_{i-1}}}\Longleftrightarrow
e_{i}+d_0=e_{i-1}+d_1.\]
Since for all $i\in \{1,2,...,s-1\}$ we obtain
\begin{equation}\label{req}
  e_i-e_{i-1}=d_1-d_0=d.
\end{equation}
Thus, $e_0,e_1,...,e_{s-1}$ are consecutive terms of an arithmetic
progression. Summing the equalities~\eqref{req} above, we obtain
\begin{equation}\label{req1}
  e_{s-1}-e_0=(s-1)(d_1-d_0).
  \end{equation}
 Since $\frac{d_0}{d_1}=\frac{e_0}{e_{s-1}}$ we have $d_0=\frac{e_0}{e_{s-1}}d_1$. Substituting this into~\eqref{req1}, we obtain
  $$e_{s-1}-e_0=(s-1)d_1(1-\frac{e_0}{e_{s-1}}).$$
  Since $e_{s-1}-e_0$, cancellation yields
  $e_{s-1}=(s-1)d_1.$
  Using $d_1$, we obtain
  \[e_{s-1}=(s-1)\frac{\sqrt{e_0e_{s-1}(e_0e_{s-1}+4)}-e_0e_{s-1}}{2e_0},\]
  \[2e_0e_{s-1}=(s-1)[\sqrt{e_0e_{s-1}(e_0e_{s-1}+4)}-e_0e_{s-1}].\]
  Putting ${e_0}{e_{s-1}}\equiv p$ and solving the resulting equation, we obtain $e_0e_{s-1}=p=\frac{(s-1)^2}{s}.$
  \end{proof}
        \begin{corollary}
      For the coding system of the numbers in the interval $[d_0,d_1]$ by
means of $A_2$-continued fractions to have zero redundancy, it is
necessary and sufficient that$
e_0e_1=\frac{1}{2}.$
In this case,
$d_0=\frac{1}{2e_1}$, $d_1=\frac{1}{2e_0}$.
In particular, if $e_0=\frac{1}{2}$, then
$e_1=1$, $d_0=\frac{1}{2}$, $d_1=1$.
  \end{corollary}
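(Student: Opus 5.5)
This corollary is the specialization $s=2$ of Theorem~\ref{repreth}, and I would prove it that way. With $s=2$ the arithmetic-progression condition says only that $e_1-e_0=d_1-d_0$, since there is one difference. By the Corollary following Lemma~\ref{lem6}, for $s=2$ this equality is equivalent to $e_0e_1=\frac12$. The condition can also be read off directly from $\frac{(s-1)^2}{s}$ with $s=2$. The two directions then go as follows.

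\emph{Necessity.} Assume the coding of $[d_0,d_1]$ by $A_2$-fractions has zero redundancy. Then $E=[d_0,d_1]$, and by Theorem~\ref{repreth} the first-rank cylinders $\Delta^{A_2}_{e_1}$ and $\Delta^{A_2}_{e_0}$ share exactly one endpoint. Hence $e_1+d_0=e_0+d_1$, and Lemma~\ref{lem6} with ``$=$'' gives $e_0e_1=\frac12$.

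\emph{Sufficiency.} Assume $e_0e_1=\frac12$. The same Corollary gives $e_1-e_0=d_1-d_0$, so by Theorem~\ref{mth} the set $E$ is the interval $[d_0,d_1]$. By Theorem~\ref{repreth} the coding then has zero redundancy.

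The step I expect to need the most care is the sufficiency claim that the set of points with two representations is at most countable. Theorem~\ref{repreth} states this, but its proof only treats first-rank cylinders. I would argue as follows.
\begin{itemize}
\item Since the first-rank cylindrical intervals touch only at endpoints, the same holds inside every cylinder of every rank, because the map $t\mapsto[0;c_1,\dots,c_m,t^{-1}]$ is a homeomorphism. So any point with two representations is a common endpoint of two adjacent cylinders of some rank $m$.
\item There are at most $2^m$ such endpoints for each $m$, so these points form a countable set.
\item By the diameter estimate (Corollary after the length lemma), cylinders shrink to points. So a point that is never an endpoint of a cylinder has exactly one representation.
\end{itemize}

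For the explicit values, put $p=e_0e_1=\frac12$ into \eqref{eq:1} and \eqref{eq:2}. Then $\sqrt{p(p+4)}-p=\frac32-\frac12=1$, so $d_0=\frac{1}{2e_1}$ and $d_1=\frac{1}{2e_0}$. In particular, if $e_0=\frac12$ then $e_1=1$, $d_0=\frac12$ and $d_1=1$. This agrees with the Corollary following Theorem~\ref{lem:porivn}, which gives $d_0=e_0$ and $d_1=e_{s-1}$ when $e_0e_{s-1}=\frac12$.
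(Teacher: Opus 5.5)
Your proposal is correct and takes the same route as the paper: the corollary is the case $s=2$ of Theorem~\ref{repreth}, where $\frac{(s-1)^2}{s}=\frac12$ and, by the corollary to Lemma~\ref{lem6}, the single condition $e_1-e_0=d_1-d_0$ is equivalent to $e_0e_1=\frac12$; the values of $d_0,d_1$ then follow by substituting $p=\frac12$ into \eqref{eq:1} and \eqref{eq:2}. Your supplementary countability argument goes beyond what the paper writes, and the same observation (rank-$m$ cylinders have pairwise disjoint interiors) also gives the other half of zero redundancy that your bullets omit: each point lies in at most two rank-$m$ cylinders for every $m$, so it has at most two representations.
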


  It is worth noting that the case $s=2$ deserves special attention due
to the minimal cardinality of the alphabet. This case has been studied
in a number of works~\cite{DKPr,prk,PratsKyurchev}, and the results of
the topological and metric analysis obtained therein have found
applications in various areas of mathematics, in particular, in the
theory of singularly continuous random variables of the
Jessen--Wintner type.

In the case of zero redundancy, the numbers having two representations,
namely:\\
    $$ \Delta^{A_s}_{c_1...c_{2m-1}c_{2m}(e_{s-1}e_0)}=
  \Delta^{A_s}_{c_1...c_{2m-1}[c_{2m}-d](e_0e_{s-1})},$$
  $$\Delta^{A_s}_{c_1...c_{2m-2}c_{2m-1}(e_{s-1}e_0)}=
  \Delta^{A_s}_{c_1...c_{2m-2}[c_{2m-1}+d](e_0e_{s-1})},$$
 are called \emph{$A_s$-binary numbers}, whereas those having a unique
representation are called \emph{$A_s$-unary numbers}.

 For $s=3$, the following equalities hold:
  \[\Delta^{A_3}_{c_1...c_{2k}e_2(e_2e_0)}=\Delta^{A_3}_{c_1...c_{2k}e_1(e_0e_2)},\;
  \Delta^{A_3}_{c_1...c_{2k}e_1(e_2e_0)}=\Delta^{A_3}_{c_1...c_{2k}e_0(e_0e_2)}.\]
    \[\Delta^{A_3}_{c_1...c_{2k-1}e_2(e_0e_2)}=\Delta^{A_3}_{c_1...c_{2k-1}e_1(e_2e_0)},\;
  \Delta^{A_3}_{c_1...c_{2k-1}e_1(e_0e_2)}=\Delta^{A_3}_{c_1...c_{2k-1}e_0(e_2e_0)}.\]
The set of all $A_s$-binary numbers is countable and dense in the
interval $[d_0,d_1]$.

\section{Applications of $A_s$-Representations of Numbers}
 
\begin{lemma}\label{lemma6}
Suppose that the $A_s$-representation of numbers has zero redundancy,
i.e., the conditions of Theorem~\ref{repreth} are satisfied. Then, for
any interval $u\subset[d_0,d_1]$, there are at most
$2(s^v+s-1)$ $A_s$-cylinders that cover $u$ and have length not exceeding
the length of $u$, where $v$ is the smallest positive integer satisfying
$C_2^v\leq C_1$, and $C_1,C_2$ are the constants whose existence is
asserted in Theorem~\ref{th2}.
\end{lemma}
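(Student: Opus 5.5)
The proof is a standard "comparable cover" argument: choose cylinders by length, then count how many can meet $u$. Fix an interval $u\subset[d_0,d_1]$ with $|u|>0$. Under zero redundancy, the cylindrical intervals of each fixed rank tile $[d_0,d_1]$, with pairwise disjoint interiors, and each cylindrical interval of rank $m$ is the union of its $s$ subintervals of rank $m+1$. By Theorem~\ref{th2}, passing from a cylinder to one of its children multiplies the diameter by a factor in $(C_1,C_2)$.

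First, for each point $x\in u$ (or for each branch of the tree of cylinders), I would select the \emph{maximal} cylinder containing it whose length is at most $|u|$. Concretely, for each cylinder $\overline{\Delta}$ meeting the interior of $u$, descend along its ancestors and stop at the first rank $k$ with $|\overline{\Delta}^{A_s}_{c_1\ldots c_k}|\le |u|$. Because these chosen cylinders are maximal, they form a disjoint (up to endpoints) family $\mathcal{F}$ covering $u$, and each member has length at most $|u|$. Maximality gives that the parent of each chosen cylinder has length $>|u|$, so by Theorem~\ref{th2} each chosen cylinder has length $>C_1|u|$. (The case where $[d_0,d_1]$ itself has length $\le|u|$ is trivial: one cylinder, the whole interval, suffices.)

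Second, I would count. The members of $\mathcal{F}$ lying entirely inside $u$ have disjoint interiors and lengths $>C_1|u|$, and this alone gives a count of about $1/C_1$. The statement, however, wants the bound phrased through $v$ with $C_2^v\le C_1$, so I would instead group the members of $\mathcal{F}$ by their parents. The parents have length $>|u|$, so at most two parents can have interiors meeting $u$ without containing $u$: one sticks out on the left and one on the right. In fact, the parents meeting $u$ form at most two ancestor chains. Inside a fixed parent $P$ of length $>|u|$, consider its descendants of $v$ generations. Each such descendant has length $<C_2^v|P|$. We also need a bound $|P|\le |u|/C_1$ via the grandparent relation, which gives descendant length $\le C_2^v|u|/C_1\le |u|$. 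Hence the selection stops within $v$ levels below $P$, and at most $s^v$ selected cylinders lie below $P$. The leftover $s-1$ term accounts for siblings at the top level: a parent straddling an endpoint of $u$ contributes its chosen children directly, which is at most $s-1$ besides the one continuing. Doubling, for the two ends of $u$, gives $2(s^v+s-1)$.

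The main obstacle is making the "two parents/two chains" structure precise. The step $|P|\le|u|/C_1$ is not automatic, since $P$ could be huge. I would handle this by working from the two endpoints of $u$. Take the smallest cylinder containing the left endpoint together with a neighbourhood into $u$, then follow the chain of cylinders containing the left endpoint. At each rank, at most $s-1$ siblings lie entirely in $u$, and they are either already of length $\le|u|$ or subdivided at most $v$ more times. I would first try to bound the number of ranks at which the chain contributes, using the ratio bounds, and if this does not yield exactly $s^v+s-1$, I would accept a bound of the same form with this bookkeeping.
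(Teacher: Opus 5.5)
You have a genuine gap: the proposal never reaches the number $2(s^v+s-1)$. It ends by saying you would ``accept a bound of the same form'', which does not prove the lemma.

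Your first step is correct. The maximal cylinders of length at most $|u|$ that meet the interior of $u$ form a family $\mathcal F$ with pairwise disjoint interiors covering $u$. Every member has length in $(C_1|u|,|u|]$, at most two members stick out of $u$, and fewer than $1/C_1$ lie inside it.

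The argument stalls on a misdiagnosis. The inequality you call the main obstacle, $|P|\le|u|/C_1$, holds automatically for every parent $P$ of a member $I\in\mathcal F$, because $C_1|P|<|I|\le|u|$. No grandparent is involved, and such a $P$ cannot be huge. Two further points in that paragraph are off:
\begin{itemize}
\item ``At most two parents'' is false, since parents of members can be nested along a chain at an endpoint of $u$. What is true is that there are at most two \emph{maximal} ones.
\item The extra $s-1$ is fitted to the target rather than derived.
\end{itemize}

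With that observation your sketch closes, and with a better constant. The parents of members that are maximal under inclusion have pairwise disjoint interiors, are longer than $u$, and meet its interior. So each contains $u$ or has an endpoint of $u$ in its interior, and there are at most two of them. Below such a $P$, every descendant of depth $\ge v$ has length $<C_2^v|P|<C_2^v|u|/C_1\le|u|$, so it cannot be the parent of a member. The members below $P$ therefore form an antichain within the first $v$ levels of the $s$-ary tree under $P$. There are at most $s^v$ of them, so $|\mathcal F|\le 2s^v\le 2(s^v+s-1)$.

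There is an even more direct route. Since the $s$ children tile their parent, $1=\sum_c|\bar\Delta^{A_s}_{c_1\ldots c_mc}|/|\bar\Delta^{A_s}_{c_1\ldots c_m}|<sC_2$, so $C_2>1/s$. Hence $1/C_1\le C_2^{-v}<s^v$, and your first count already gives $|\mathcal F|\le s^v+1$.

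Your endpoint-chain fallback would also work. Siblings inside $u$ occur only at ranks where the chain cylinder $L$ satisfies $|u|<|L|<|u|/C_1$. There are at most $v$ such ranks, and $1+v(s-1)\le s^v$.

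The paper argues differently. It takes the least $n$ for which $u$ contains a cylinder $\Delta^{A_s}_{c_1\ldots c_{n+1}}$, so that $u$ lies in two adjacent rank-$n$ cylinders $w_0,w_1$. The contained cylinder serves as the yardstick $|u|\ge C_1|w_0|\ge C_2^v|w_0|$. On each side it counts at most $s-1$ whole children plus the rank-$(n+v)$ pieces of one partial child, which is where the form $s^v+s-1$ comes from. Your maximal-cover route, once repaired, avoids that case analysis, including the auxiliary index $k$ on the $w_1$ side.
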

\begin{proof}
Let $n+1$ be a positive integer such that $u$ contains an $A_{s}$-cylinder $\Delta^{A_{s}}_{c_1...c_{n+1}}$ of rank $n+1$, but contains no
$A_{s}$-cylinder of rank $n$. Clearly, it is possible that $u=\Delta^{A_{s}}_{c_1...c_{n+1}}$. If this is not the case, then there exist two $A_s$-cylinders
$w_0$ and $w_1$ of rank $n$ such that $u\subset w_0\cup w_1$, moreover $\max w_0=\min w_1$.

Let $u_0=u\cap w_0=[\min u;\max w_0]$, $u_1=u\cap w_1=[\max w_0;\max u]$. Without loss of generality, assume that $\Delta^{A_{s}}_{c_1...c_{n+1}} \subset u_0$. Then  $w_0=\Delta^{A_{s}}_{c_1...c_n}$, and $u_0$ can contain at most $s-1$ cylinders of rank $n+1$, since  $u_0\neq\Delta^{A_{s}}_{c_1...c_n}$. All these cylinders have length
not exceeding $|u_0|$ and, consequently, not exceeding $|u|$.

 These cylinders may cover $u_0$, but this is not guaranteed. If they do
not, there exists an $A_s$-cylinder $\Delta^{A_{s}}_{c_1...c_n j}$, which, together with the cylinders already considered, covers $u_0$.
If its length exceeds $|u_0|$, consider all cylinders of rank $n+v$
contained in $\Delta^{A_{s}}_{c_1...c_n j}$. Their number is $s^v$. Each of them has length not exceeding $|u_0|$.
Indeed, by Theorem~\ref{th2}, $$C_1|\Delta^{A_{s}}_{\alpha_1...\alpha_n}|\leq |\Delta^{A_{s}}_{\alpha_1...\alpha_n\alpha_{n+1}}|\leq C_2|\Delta^{A_{s}}_{\alpha_1...\alpha_n}|,$$ so, \[|\Delta^{A_{s}}_{c_1...c_nj\gamma_1...\gamma_{v-1}}|\leq
C_2^v|\Delta^{A_{s}}_{c_1...c_n}|\leq
C_1|\Delta^{A_{s}}_{c_1...c_n}|\leq|\Delta^{A_{s}}_{c_1...c_nc_{n+1}}|\leq |u_0|. \]
Thus, the cylinders of rank $n+1$ together with the cylinders of rank
$n+v$ form a cover of $u_0$ by cylinders whose lengths do not exceed
$|u_0|$. Hence, at most $s^v+s-1$ $A_s$-cylinders are sufficient to cover $u_0$.

Now consider the interval $u_1$, which is contained in $w_1=\Delta^{A_{s}}_{\beta_1...\beta_n}$. There is no guarantee that $u_1$ contains a cylinder of rank $n+1$.
However, by definition, there exists a smallest $k$ such that $\Delta^{A_{s}}_{\beta_1...\beta_n\underbrace{0...0}_k}\subset u_1$. Then, by the minimality of $k$, the interval $u_1$ can contain at most
$s-1$ cylinders of rank $n+k$. Their union may coincide with $u_1$, but
this is not guaranteed. If it does not, consider an
$A_s$-cylinder $\Delta^{A_{s}}_{\beta_1...\beta_n\gamma_1...\gamma_k}$, which is not contained in $u_1$ but has a nonempty intersection with
$u_1$. If its length exceeds $|u_1|$, consider all cylinders of rank
$n+k+v$ contained in $\Delta^{A_{s}}_{\beta_1...\beta_n\gamma_1...\gamma_k}$. Their number is $s^v$. Each of these cylinders has length not exceeding
$|u_1|$, which follows from the fact that the basic metric ratio for the
$A_s$-representation is bounded away from both zero and one. Hence,
at most $s^v+s-1$ $A_s$-cylinders are sufficient to cover $u_1$ as well. Since
$u=u_0\cup u_1,$
at most
$2(s^v+s-1)$
$A_s$-cylinders are sufficient to cover $u$. 
\end{proof}
\begin{theorem}
In the computation of the Hausdorff--Besicovitch dimension of Borel
subsets of the interval $[d_0,d_1]$, it suffices to consider coverings
by $A_s$-cylinders.
\end{theorem}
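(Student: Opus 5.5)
The plan is to show that the Hausdorff measure and dimension computed with arbitrary coverings coincide with those computed using only coverings by $A_s$-cylinders (equivalently, cylindrical intervals $\bar{\Delta}^{A_s}_{c_1\ldots c_m}$). Since $E=[d_0,d_1]$ under the zero-redundancy assumptions of Theorem~\ref{repreth}, every Borel set $B\subset[d_0,d_1]$ can be covered by cylinders. For $\alpha>0$ and $\varepsilon>0$, let $H^\alpha_\varepsilon(B)$ be the usual quantity, defined as the infimum of $\sum|u_j|^\alpha$ over countable covers of $B$ by sets of diameter at most $\varepsilon$. Let $\widetilde H^\alpha_\varepsilon(B)$ be the same infimum taken only over covers by $A_s$-cylinders of diameter at most $\varepsilon$. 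The goal is to prove
\[
H^\alpha_\varepsilon(B)\le \widetilde H^\alpha_\varepsilon(B)\le 2(s^v+s-1)\,H^\alpha_\varepsilon(B).
\]
Letting $\varepsilon\to0$ then shows that the two measures $H^\alpha$ and $\widetilde H^\alpha$ vanish, or are infinite, simultaneously. Hence the critical exponents coincide, and the dimension computed with cylinders equals $\dim_H B$.

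The left inequality is immediate: a covering by cylinders is a particular admissible covering, and a cylinder's diameter equals the length of its cylindrical interval. For the right inequality, take an arbitrary $\varepsilon$-cover $\{u_j\}$ of $B$. Each $u_j$ can be replaced by the closed interval $[\inf u_j,\sup u_j]\cap[d_0,d_1]$ without changing its diameter, so we may assume the $u_j$ are intervals. Now apply Lemma~\ref{lemma6} to each $u_j$. It gives at most $N=2(s^v+s-1)$ cylinders covering $u_j$, each of length at most $|u_j|\le\varepsilon$, so their contribution is at most $N|u_j|^\alpha$. Collecting these cylinders over all $j$ yields a countable $\varepsilon$-cover of $B$ by cylinders with $\sum\le N\sum_j|u_j|^\alpha$. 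Taking the infimum gives the right inequality. The constant $N$ is independent of $u_j$, $\varepsilon$ and $\alpha$, because $v$ depends only on $C_1$ and $C_2$ from Theorem~\ref{th2}.

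The main obstacle is to check that Lemma~\ref{lemma6} really applies uniformly to every interval, including degenerate or very small ones. A single point is itself a cylinder of infinite rank, and since $|\bar{\Delta}^{A_s}_{c_1\ldots c_m}|\to0$ it can be covered by an arbitrarily small cylinder. It is also worth noting that $\alpha>0$ strictly, so zero-length pieces contribute nothing. Once this uniformity is granted, the rest is the standard comparison (equivalence) argument for Hausdorff measures. The conclusion is $\dim_H B=\inf\{\alpha:\widetilde H^\alpha(B)=0\}$.
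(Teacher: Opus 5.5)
Your proposal is correct and follows the paper's proof essentially verbatim: you use the same two-sided estimate $m^{\alpha}_{\varepsilon}\le l^{\alpha}_{\varepsilon}\le 2(s^v+s-1)\,m^{\alpha}_{\varepsilon}$, with the left inequality trivial and the right one obtained by applying Lemma~\ref{lemma6} to each covering interval, and then let $\varepsilon\to0$. Your extra care with degenerate intervals is a welcome refinement, but a finite-rank cylinder covering a point has positive length, so such pieces do not ``contribute nothing''; instead, cover the $j$-th such point by a cylinder of $\alpha$-content at most $\delta 2^{-j}$, which gives $l^{\alpha}_{\varepsilon}\le 2(s^v+s-1)\,m^{\alpha}_{\varepsilon}+\delta$, and then let $\delta\to0$.
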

\begin{proof}
Let us recall the definition of the $\alpha$-dimensional Hausdorff measure and the Hausdorff--Besicovitch dimension of subsets of $R^1$. Let $\alpha$ be a positive real number. The $\alpha$-dimensional Hausdorff measure of a set $B\subset R^1$ is defined as the limit
\[
H^{\alpha}(B)=\lim\limits_{\varepsilon\to 0}m_{\varepsilon}^{\alpha}(B),
\qquad
m_{\varepsilon}^{\alpha}(B)=
\inf\limits_{d(B_k)\leq\varepsilon}
\{
\sum\limits_{k}|B_k|^{\alpha}:
\bigcup\limits_{k}B_k\supset B
\},
\]
where the infimum is taken over all coverings $\bigcup_k B_k$ of the set $B$ by intervals $B_k$ whose lengths $|B_k|$ do not exceed $\varepsilon$.

The Hausdorff--Besicovitch dimension $\alpha_0(B)$ of the set $B$ is defined by
\[
\alpha_0(B)=
\inf\{\alpha:H^{\alpha}(B)=0\}
=
\sup\{\alpha:H^{\alpha}(B)=\infty\}.
\]
 
Let $W$ denote the family of all $A_s$-cylinders, and let $W_n$ denote
the family of all $A_s$-cylinders of rank $n$.

Let $E$ be an arbitrary Borel subset of the interval $[d_0,d_1]$,
$\alpha>0$, $\varepsilon>0$, and let $U$ denote the family of all
intervals contained in $[d_0,d_1]$. Consider separately $\varepsilon$-coverings of $E$ by intervals and by
$A_s$-cylinders, together with the corresponding $\alpha$-contents,
defined by
 $$m_{\varepsilon}^{\alpha}(E)=\inf\limits_{|u_i|\leq\varepsilon}\{\sum\limits_{i}|u_i|^{\alpha}:
 E\subset \bigcup\limits_{i}u_i, u_i\in U, |u_i|\leq \varepsilon\},$$
 and
  $$l_{\varepsilon}^{\alpha}(E)=\inf\limits_{|w_i|\leq\varepsilon}\{\sum\limits_{i}|w_i|^{\alpha}:
 E\subset \bigcup\limits_{i}w_i, w_i\in W, |w_i|\leq \varepsilon\}.$$

We show that
\begin{equation}\label{mit}
  m_{\varepsilon}^{\alpha}(E)\leq l_{\varepsilon}^{\alpha}(E)\leq 2(s^v+s-1) m_{\varepsilon}^{\alpha}(E),
\end{equation}
where $v$ is the smallest positive integer satisfying
$C_2^v\leq C_1$. The left-hand inequality is obvious, since every $A_s$-cylinder is an
interval, and hence the class of coverings by $A_s$-cylinders is a
subclass of the class of all coverings by intervals.

The right-hand inequality follows from the lemma~\ref{lemma6}. Therefore, the Hausdorff $\alpha$-measure $H^\alpha(E)$ and $l^\alpha(E)=\lim\limits_{\varepsilon\to 0}l_{\varepsilon}^{\alpha}(E)$ simultaneously take the values $0$ and $\infty$ as $\alpha$ varies; that
is,
\[l^{\alpha}(E)=\begin{cases}
                  \infty & \mbox{при } \alpha<\alpha_0(E),\\
                  0 & \mbox{при } \alpha>\alpha_0(E),
                \end{cases}\]
where $\alpha_0(E)$ is the Hausdorff--Besicovitch dimension of $E$.
Hence, in computing the Hausdorff--Besicovitch dimension of subsets of
the interval $[d_0,d_1]$, it suffices to consider coverings by
$A_s$-cylinders.
\end{proof}
\begin{remark}
The preceding theorem is an analogue of Billingsley's theorem~\cite{Ergodic} for the representation of numbers in the $s$-adic numeral system.
\end{remark}

\end{document}